\numberwithin{equation}{section}
 \def\cocoa{{\hbox{\rm C\kern-.13em o\kern-.07em C\kern-.13em o\kern-.15em A}}}
\newtheorem{theorem}{Theorem}[section]
\newtheorem{proposition}[theorem]{Proposition}
\newtheorem{corollary}[theorem]{Corollary}
\theoremstyle{definition}
\newtheorem{remark}[theorem]{Remark}
\newtheorem{example}[theorem]{Example}
\newtheorem{question}[theorem]{Question}
\newcommand {\Hom}{\mathcal{H}\kern -0.25ex{\mathit om}}
\newcommand {\Ext}{\mathcal{E}\kern -0.25ex{\mathit xt}}
\newcommand {\im}{\mathrm{im}}
\newcommand {\rk}{\mathrm{rk}}
\newcommand {\Hilb}{\mathcal{H}\kern -0.25ex{\mathit ilb\/}}
\newcommand {\field}{k}
\newcommand {\cA}{\mathcal{A}}
\newcommand {\cB}{\mathcal{B}}
\newcommand {\bZ}{\mathbb{Z}}
\newcommand {\bC}{\mathbb{C}}
\newcommand {\bP}{\mathbb{P}}
\newcommand {\bF}{\mathbb{F}}
\newcommand{\cC}{{\mathcal C}}
\newcommand{\cS}{{\mathcal S}}
\newcommand{\cE}{{\mathcal E}}
\newcommand{\cF}{{\mathcal F}}
\newcommand{\cM}{{\mathcal M}}
\newcommand{\cO}{{\mathcal O}}
\newcommand{\cG}{{\mathcal G}}
\newcommand{\cI}{{\mathcal I}}
\newcommand{\Bl}{\operatorname{Bl}}
\newcommand{\Pic}{\operatorname{Pic}}
\newcommand{\Num}{\operatorname{Num}}
\def\p#1{{\bP^{#1}}}
\def\mapright#1{\mathbin{\smash{\mathop{\longrightarrow}
\limits^{#1}}}}
\title[Examples of surfaces which are Ulrich--wild]{Examples of surfaces which are Ulrich--wild}
\subjclass[2010]{14J60}
\keywords{Vector bundle, Ulrich bundle, Ulrich--wild, Surfaces of low degree.}
\author[Gianfranco Casnati]{Gianfranco Casnati}
\thanks{The author is a member of GNSAGA group of INdAM and is supported by the framework of PRIN 2015 \lq Geometry of Algebraic Varieties\rq, cofinanced by MIUR}
\begin{document}

\begin{abstract}
We give examples of surfaces which are Ulrich--wild, i.e. that support families of dimension $p$ of pairwise non--isomorphic, indecomposable, Ulrich bundles for arbitrary large $p$.
\end{abstract}

\maketitle
\section{Introduction and Notation}
Throughout the whole paper $\field$ will denote an algebraically closed field and $\p N$ the projective space over $\field$ of dimension $N$. 

If $X\subseteq\p N$ is a {\sl variety}, i.e. an integral closed subscheme, then the study of coherent sheaves on $X$ is an important tool for understanding its geometric properties. From a cohomological viewpoint, the simplest sheaves on such an $X$ are the {\sl Ulrich} ones with respect to the very ample line bundle $\cO_X(h_X):=\cO_{\p N}(1)\otimes\cO_X$. There are several equivalent characterizations for such sheaves (e.g. see Proposition 2.1 of \cite{E--S--W}). In this paper we call the sheaf $\cF$  Ulrich if 
$$
h^i\big(X,\cF(-ih_X)\big)=h^j\big(X,\cF(-(j+1)h_X)\big)=0
$$
for each $i>0$ and $j<\dim(X)$. Ulrich sheaves are aCM, i.e. $h^i\big(X,\cF(th_X)\big)=0$ for $0< i<\dim(X)$ and each $t\in\bZ$: thus they are vector bundles when restricted to the smooth part of $X$.

In \cite{E--S--W}, the authors asked the following questions.

\begin{question} 
\label{ESW}
Is every variety (or even scheme) $X\subseteq\p N$ the support of an Ulrich sheaf? If so, what is the smallest possible rank for such a sheaf?
\end{question}

At present, answers to the questions above are known in a number of particular cases: e.g.,  see \cite{A--C--MR}, \cite{A--F--O}, \cite{Bea}, \cite{Bea1}, \cite{C--H2}, \cite{Cs4}, \cite{Cs5}, \cite{C--G}, \cite{C--K--M2}, \cite{C--MR}, \cite{MR--PL1}, \cite{MR--PL}, \cite{MR--PL2}, \cite{PL--T}.

The existence of many Ulrich sheaves on a variety $X$ can be viewed as a sign of the complexity of the variety itself. For example one could ask if $X\subseteq\p N$ is of {\sl Ulrich--wild representation type}, i.e. if it supports families of dimension $p$ of pairwise non--isomorphic, indecomposable, Ulrich sheaves for arbitrary large $p$. 

Ulrich--wildness is known for several classes of varieties. 
The case of {\sl surfaces}, i.e. smooth integral projective varieties of dimension $2$ is of particular interest. In this paper we prove the following result ($K_S$ denotes the canonical class of $S$).

\begin{theorem}
\label{tSurfaceWild}
Let $S$ be a surface endowed with a very ample line bundle  $\cO_S(h_S)$ and let $d:=h_S^2$.

If $S$ supports an Ulrich bundle with respect to $\cO_S(h_S)$ and 
\begin{equation}
\label{Bound1}
d^2+4(\chi(\cO_S)-2)d-{(h_SK_S)^2}>0,
\end{equation}
then $S$ is Ulrich--wild with respect to $\cO_S(h_S)$.
\end{theorem}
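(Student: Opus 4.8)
The plan is to exhibit inside the category of Ulrich bundles a pair of objects generating a copy of the representation category of a wild quiver; since the Ulrich condition is a collection of cohomology vanishings, it is stable under extensions, so every iterated extension of Ulrich bundles is again Ulrich and it is enough to control the relevant $\operatorname{Ext}$--spaces. Concretely, I would use the following wildness criterion: if $S$ carries two \emph{stable} Ulrich bundles $\cE_1,\cE_2$ of the same slope with $\cE_1\not\cong\cE_2$ and $\dim_\field\ext^1(\cE_1,\cE_2)\ge 3$, then $S$ is Ulrich--wild. Indeed, stability forces $\operatorname{End}(\cE_i)=\field$ and $\operatorname{Hom}(\cE_1,\cE_2)=\operatorname{Hom}(\cE_2,\cE_1)=0$, so $\cE_1,\cE_2$ are orthogonal bricks, and choosing three independent classes $\xi_1,\xi_2,\xi_3\in\ext^1(\cE_1,\cE_2)$ lets one send a triple of matrices $(A_1,A_2,A_3)$ to the extension of $\cE_2^{\oplus n}$ by $\cE_1^{\oplus n}$ with class $\sum_i\xi_i\otimes A_i$; this realises the wild Kronecker quiver $K_3$ and produces families of pairwise non--isomorphic indecomposable Ulrich bundles of arbitrarily large dimension. (Should the two bundles coincide, one argues identically with a single brick carrying two self--extensions.)

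The pair itself I would manufacture from the bundle granted by the hypothesis. First I would reduce to a \emph{stable} Ulrich bundle $\cE$ of rank $r$, passing from the given Ulrich bundle to a stable Ulrich factor. Then I would take $\cE_1:=\cE$ together with its \emph{Ulrich dual} $\cE_2:=\cE^\vee(K_S+3h_S)$, which is again an Ulrich bundle, is stable, and has the same slope $\tfrac12(3d+h_SK_S)$ as $\cE$; this supplies the two orthogonal bricks required above, the degenerate case $\cE\cong\cE^\vee(K_S+3h_S)$ being harmless. It then remains to bound $\ext^1\bigl(\cE,\cE^\vee(K_S+3h_S)\bigr)$ from below, and here the numerical hypothesis \eqref{Bound1} enters.

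For the estimate I would compute the Euler pairing $\chi(\cE,\cE_2)=\sum_i(-1)^i\dim_\field\ext^i(\cE,\cE_2)$ by Hirzebruch--Riemann--Roch on $S$, feeding in the two constraints that the Ulrich property imposes on the Chern classes of $\cE$: the slope identity $c_1(\cE)\cdot h_S=\tfrac r2(3d+h_SK_S)$ and the value of $c_2(\cE)$ forced by $\chi(\cE)=rd$. After substitution the pairing depends only on $d$, $\chi(\cO_S)$, $h_SK_S$, $K_S^2$ and on the single remaining unknown $c_1(\cE)^2$, which the Hodge Index Theorem bounds above by $c_1(\cE)^2\le (c_1(\cE)\cdot h_S)^2/d$. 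Since $c_1(\cE)^2$ enters $-\chi(\cE,\cE_2)$ with a negative sign, this upper bound becomes a genuine lower bound, and a direct manipulation gives $\ext^1(\cE,\cE_2)\ge -\chi(\cE,\cE_2)\ge \tfrac{r^2}{4d}\bigl(d^2+4(\chi(\cO_S)-2)d-(h_SK_S)^2\bigr)+2r^2$, where the first inequality merely drops the non--negative terms $\operatorname{Hom}(\cE,\cE_2)$ and $\ext^2(\cE,\cE_2)=\operatorname{Hom}(\cE_2,\cE(K_S))^\vee$. Under \eqref{Bound1} the bracket is positive, whence $\ext^1(\cE,\cE_2)>2r^2\ge 2$ and so $\ext^1(\cE,\cE_2)\ge 3$, triggering the criterion of the first paragraph. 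The step I expect to be most delicate is precisely this first reduction: guaranteeing a \emph{stable} Ulrich bundle out of the mere existence of one, so that the orthogonality of the two bricks is available; checking that the stable factors of a (semistable) Ulrich bundle are themselves Ulrich, and verifying that the Hodge Index estimate is applied in the direction that lower bounds $-\chi$, are the remaining points requiring care.
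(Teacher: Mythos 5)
Your proposal is correct and takes essentially the same route as the paper: the paper also pairs a stable Ulrich bundle $\cE$ (obtained by taking one of minimal rank, stable by Theorem \ref{tUnstable}) with its Ulrich dual $\cF:=\cE^\vee(3h_S+K_S)$, bounds $h^1\big(S,\cE^\vee\otimes\cF\big)\ge-\chi(\cE^\vee\otimes\cF)$ via the Euler--characteristic formula \eqref{CH2} and the Hodge index inequality \eqref{HodgeGeneral}, arriving at exactly your quantity $\frac{r^2}{4d}\left(d^2+4\chi(\cO_S)d-(h_SK_S)^2\right)>2$, and then concludes by the Faenzi--Pons-Llopis criterion (Theorem \ref{tFPL}). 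The only differences are cosmetic: you re-sketch that wildness criterion via the Kronecker-quiver construction instead of citing it, and you reduce to a stable Ulrich factor where the paper simply takes minimal rank.
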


Unfortunately, the above result is not sharp. E.g. if $S$ is a del Pezzo surface, $h_SK_S=-d$ and $\chi(\cO_S)=1$, thus the first member of Inequality \eqref{Bound1} is strictly negative. Nevertheless del Pezzo surfaces are Ulrich--wild as shown in \cite{PL--T} and \cite{MR--PL}: see also Section 5.3 of \cite{F--PL} where the authors prove the Ulrich--wildness of each $2$--dimensional maximally del Pezzo variety (see \cite{B--S} and the references therein for details about such varieties). 

In particular, Theorem \ref{tSurfaceWild} yields some interesting examples. The first one are complete intersection surfaces of degree $4\le d\le 9$ in $\p N$ (see Example \ref{eCI}). As a consequence of results by A. Beauville and D. Faenzi, we show in Example \ref{eAbelian} that {\sl abelian surfaces} (i.e. surfaces $S$ with $K_S=0$ and $q(S)=2$) and {\sl $K3$ surfaces} (i.e., surfaces $S$ with $K_S = 0$ and $q(S) = 0$) are Ulrich--wild. A similar result has been recently proved in \cite{Fa} for {\sl $K3$ surfaces} (i.e. surfaces $S$ with $K_S=0$ and $q(S)=0$) extending the previous works \cite{C--K--M2} and \cite{A--F--O}. Thus each minimal surface $S$ with Kodaira dimension $\kappa(S)=0$ is Ulrich--wild (see Proposition 6 of \cite{Bea3}).

We also deal with {\sl surfaces of general type}  (i.e. minimal surfaces $S$ with $\kappa(S)=2$: see Corollary \ref{cGeneralType}) such that $\cO_S(h_S)\cong\cO_S(\lambda K_S)$ for some positive $\lambda\in\bZ$.

In order to find other examples, we prove the following helpful theorem: it makes slightly more precise the results from \cite{H--U--B}.

\begin{theorem}
\label{tUlrichTensor}
Let $X, Y\subseteq\p N$ be varieties endowed with Ulrich sheaves $\cA$ and $\cB$ respectively. 

If $X$ and $Y$ intersect properly,
$\cA$ and $\cB$ are locally free of respective ranks $a$ and $b$ along $X\cap Y$ and $Y$ is locally complete intersection at the points of  $X$, then  $\cA\otimes_{\p N}\cB$ is an Ulrich bundle of rank $ab$ on $X\cap Y$.
\end{theorem}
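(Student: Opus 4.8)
The plan is to deduce Ulrichness from the characterization of Ulrich sheaves by linear resolutions (one of the equivalent conditions in Proposition 2.1 of \cite{E--S--W}): a coherent sheaf $\cF$ supported on a subscheme of $\p N$ of codimension $c$ is Ulrich exactly when, regarded as an $\cO_{\p N}$--module, it admits a \emph{linear} resolution
\begin{equation*}
0\lra \cO_{\p N}(-c)^{\beta_c}\lra\cdots\lra\cO_{\p N}(-1)^{\beta_1}\lra\cO_{\p N}^{\beta_0}\lra\cF\lra 0,
\end{equation*}
i.e. one in which each $i$--th term is generated in degree $i$. Write $n:=\dim(X)$ and $m:=\dim(Y)$; the proper intersection hypothesis gives $\dim(X\cap Y)=n+m-N$, so that $\codim(X)+\codim(Y)=\codim(X\cap Y)=:c$. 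Let $F_\bullet\lra\cA$ and $G_\bullet\lra\cB$ be the linear $\cO_{\p N}$--resolutions afforded by the Ulrich hypotheses, of lengths $\codim(X)$ and $\codim(Y)$. First I would form the total complex $T_\bullet:=\operatorname{Tot}(F_\bullet\otimes_{\p N}G_\bullet)$: since $F_i$ and $G_j$ are sums of copies of $\cO_{\p N}(-i)$ and $\cO_{\p N}(-j)$, each $T_k=\bigoplus_{i+j=k}F_i\otimes_{\p N}G_j$ is a sum of copies of $\cO_{\p N}(-k)$, so $T_\bullet$ is again linear, of length $\codim(X)+\codim(Y)=c$.

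The crux is to show that $T_\bullet$ is a resolution of $\cA\otimes_{\p N}\cB$, equivalently that $\mathcal{T}or_k^{\cO_{\p N}}(\cA,\cB)=0$ for $k>0$. This is local at the points $p\in X\cap Y$, and it is where the remaining hypotheses enter. Since $\cB$ is locally free of rank $b$ along $X\cap Y$, we have $\cB_p\cong\cO_{Y,p}^{\oplus b}$, whence $\mathcal{T}or_k^{\cO_{\p N,p}}(\cA_p,\cB_p)\cong\mathcal{T}or_k^{\cO_{\p N,p}}(\cA_p,\cO_{Y,p})^{\oplus b}$. Because $Y$ is locally complete intersection at $p$, we may write $\cO_{Y,p}=\cO_{\p N,p}/(f_1,\dots,f_{\codim(Y)})$ with $f_1,\dots,f_{\codim(Y)}$ a regular sequence, so these Tor's are the Koszul homology of $f_1,\dots,f_{\codim(Y)}$ with values in $\cA_p$. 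Thus it suffices to prove that $f_1,\dots,f_{\codim(Y)}$ is a regular sequence on $\cA_p$.

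This last point is the main obstacle, and it rests on combining the Cohen--Macaulayness coming from the Ulrich (hence aCM) hypothesis with the dimension count given by proper intersection. Indeed, $\cA$ being aCM and locally free of positive rank near $p$ forces $\cA_p\cong\cO_{X,p}^{\oplus a}$ to be a maximal Cohen--Macaulay $\cO_{\p N,p}$--module of dimension $n$. On the other hand $\cA_p/(f_1,\dots,f_{\codim(Y)})\cA_p$ is supported on $X\cap Y$ near $p$, which has dimension $n-\codim(Y)$ by proper intersection; that is, the $\codim(Y)$ elements cut the dimension of $\cA_p$ down by exactly $\codim(Y)$. For a Cohen--Macaulay module, a system of elements dropping the dimension by its length is automatically a regular sequence, so $f_1,\dots,f_{\codim(Y)}$ is $\cA_p$--regular; the Koszul homology, hence the local Tor, vanishes in positive degrees at every $p\in X\cap Y$, giving $\mathcal{T}or_k^{\cO_{\p N}}(\cA,\cB)=0$ for $k>0$.

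It then remains to assemble the conclusion. By the Tor--vanishing, $T_\bullet$ is a linear $\cO_{\p N}$--resolution of $\cA\otimes_{\p N}\cB$ of length $c=\codim(X\cap Y)$, so the characterization recalled above shows that $\cA\otimes_{\p N}\cB$ is Ulrich on $X\cap Y$. Locally at $p$ the same vanishing yields $(\cA\otimes_{\p N}\cB)_p\cong\cA_p\otimes_{\cO_{\p N,p}}\cB_p\cong(\cO_{X,p}\otimes_{\cO_{\p N,p}}\cO_{Y,p})^{\oplus ab}\cong\cO_{X\cap Y,p}^{\oplus ab}$, so the sheaf is locally free of rank $ab$ on $X\cap Y$, as claimed. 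The points to handle with care are the passage between sheaf Tor and module Tor and the verification that proper intersection really gives the height equality needed to apply the Cohen--Macaulay regular--sequence criterion.
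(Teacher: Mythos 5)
Your proposal is correct, and its commutative--algebra core coincides with the paper's: at a point $p\in X\cap Y$ one has $\cB_p\cong\cO_{Y,p}^{\oplus b}$, and since $Y$ is locally complete intersection there, everything reduces to showing that the local equations of $Y$ form a regular sequence on $\cA_p\cong\cO_{X,p}^{\oplus a}$, which follows from $\cA_p$ being Cohen--Macaulay (Ulrich $\Rightarrow$ aCM) together with the dimension drop forced by proper intersection. The paper packages this as exactness along $Y$ of the restricted resolution of $\cA$ (quoting Theorem 7 of \cite{No}); you package it as vanishing of $\tor_k^{\cO_{\p N}}(\cA,\cB)$ for $k>0$ via Koszul homology --- the same content. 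Where you genuinely diverge is the global assembly: the paper resolves only $\cA$ linearly over $\cO_{\p N}$, restricts to $Y$, tensors with $\cB$, and never uses a resolution of $\cB$; it then verifies the cohomological definition of Ulrich for $\cA\otimes_{\p N}\cB$ by a dimension--shifting chase that consumes the Ulrich vanishings of $\cB$ on $Y$. You instead resolve both sheaves linearly over $\p N$, take the total tensor complex $T_\bullet$ (linear, of length $\codim(X)+\codim(Y)=\codim(X\cap Y)$), and invoke the linear--resolution characterization of Ulrichness. Your route is symmetric in $X$ and $Y$ and replaces the cohomology chase by a quotable criterion; its one debt is that \cite[Proposition 2.1]{E--S--W} concerns minimal graded free resolutions of modules, while $T_\bullet$ is a possibly non--minimal complex of sheaves, so you should either pass to modules via $H^0_*$ or simply run the short--exact--sequence chase on $\p N$ (using that $\cO_{\p N}(j)$ has no intermediate cohomology and $h^N\big(\p N,\cO_{\p N}(j)\big)=0$ for $j\ge-N$) to extract the defining vanishings; this is the direct analogue, on $\p N$ where it is easier, of the chase the paper performs on $Y$, so it is a routine point to write out rather than a gap.
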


Since $\cO_{\p n}$ is the unique Ulrich bundle on $\p n$ with respect to $\cO_{\p n}(1)$, it follows that the above Theorem generalizes the well--known obvious fact that the restriction of an Ulrich sheaf to a general linear section of a variety is still Ulrich. 
One can construct some other applications besides hyperplane sections. 

E.g. in \cite{B--C--P} the authors considered the intersection of two varieties $G_1,G_2\subseteq\p9$ projectively isomorphic to the Pl\"ucker model of the grassmannian $G(2,5)$ of lines in $\p4$. If $G_1$ and $G_2$ are general, then  $F:=G_1\cap G_2$ is a smooth threefold which is {\sl Calabi--Yau}, i.e. $\cO_F(K_F)\cong\cO_F$ and $h^1\big(F,\cO_F\big)=0$: $F$ is called a  {\sl GPK$^{\mathit 3}$ threefold}. In Example \ref{eGPK3} we show that $F$  supports Ulrich bundles by applying Theorem \ref{tUlrichTensor} above, though $F$ is not a hypersurface section.

A second application is the construction of Ulrich bundles on hypersurface sections of del Pezzo threefolds (see Example \ref{eDelPezzo}).

We end the paper by collecting what we know about surfaces of degree up to $8$. 
We call  {\sl special} each Ulrich bundle $\cE$ on $S$ of even rank such that
$$
c_1(\cE)=\frac{\rk(\cE)}2(3h+K_S).
$$
If $\cE$ is any Ulrich bundle on $S$, then $\cE\oplus\cE^\vee(3h_S+K_S)$ is special: thus $S$ supports Ulrich bundles if and only if it supports special ones. The existence of a special Ulrich bundle $\cE$ on $S$ has some interesting consequences: e.g. if $\rk(\cE)=2$, then the Chow form of $S$ is pfaffian.

We then prove the following results over the complex field $\bC$ (if $S\subseteq\p N$, then $\pi(S)$ is the genus of a general hyperplane section of $S$).

\begin{theorem}
\label{tLowDegree}
Let $S\subseteq\p N$ be a surface of degree $d\le8$ and $\kappa(S)\ne1$.

Then $S$ supports special Ulrich bundles of rank $r_{Ulrich}^{sp}$ as indicated in the Table A.
\end{theorem}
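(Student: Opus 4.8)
The plan is to run through the classification of complex projective surfaces $S\subseteq\p N$ of degree $d=h_S^2\le 8$ with $\kappa(S)\ne1$, and for each resulting type either to quote an existing construction of Ulrich bundles or to produce one directly, recording in Table A the smallest even rank for which a special bundle occurs. The reduction already observed in the introduction is the backbone: since $\cE\oplus\cE^\vee(3h_S+K_S)$ is special whenever $\cE$ is Ulrich, it suffices to exhibit a single Ulrich bundle of controlled invariants in each case, and then to decide whether the rank can be brought down to the special value claimed.

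First I would sort the surfaces by the triple $(d,\pi(S),\kappa(S))$. When $\kappa(S)=-\infty$ the surfaces of degree $\le8$ are rational or ruled: linearly embedded $\p2$, quadrics, (weak) del Pezzo surfaces, rational normal scrolls, conic bundles and their inner projections, together with a few scrolls over curves of positive genus. Del Pezzo surfaces are settled by \cite{PL--T} and \cite{MR--PL}, which supply rank $2$ special bundles, while for $\p2$, the quadric and the scrolls the relevant Ulrich bundles are classical; in these rows the minimal special rank is $2$. When $\kappa(S)=0$ the degree bound confines $S$ to the abelian, $K3$, Enriques or bielliptic families: the abelian and $K3$ cases are exactly Example \ref{eAbelian}, after Beauville and Faenzi, and the Enriques and bielliptic cases are handled by the analogous low-rank constructions. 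When $\kappa(S)=2$, the only minimal surfaces of degree $\le8$ carrying a very ample $\cO_S(h_S)$ are those with $\cO_S(h_S)\cong\cO_S(\lambda K_S)$ for a positive integer $\lambda$, which fall under Corollary \ref{cGeneralType}.

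To fill in the numerical entry $r^{sp}_{Ulrich}$ I would use Riemann--Roch: for a putative special bundle $\cE$ of rank $r$ both $c_1(\cE)=\tfrac r2(3h_S+K_S)$ and, via $\chi(\cE)=rd$ together with the Ulrich vanishing, the value of $c_2(\cE)$ are forced, so the Chern numbers are completely determined in each row. I would then test whether a rank $2$ bundle with these numbers can exist---equivalently whether the Chow form of $S$ is pfaffian---and, where a Bogomolov-type inequality or a direct cohomological obstruction forbids it, pass to the next even rank. This bookkeeping, read against the classification, produces the table.

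The step I expect to be the genuine obstacle is not existence but minimality: existence is guaranteed case by case by the cited results, whereas ruling out special Ulrich bundles of smaller even rank on the borderline surfaces requires, for each such row, a stability or Bogomolov argument to obstruct rank $2$. This is the delicate point that distinguishes the entries of Table A from one another.
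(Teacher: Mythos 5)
Your overall strategy---running through the classification of degree $\le 8$ surfaces and quoting or building a construction row by row---is indeed the shape of the paper's proof, but three of your concrete steps would fail. First, your treatment of the $\kappa(S)=2$ rows is circular: Corollary \ref{cGeneralType} \emph{assumes} that $S$ supports an Ulrich bundle and concludes Ulrich--wildness; it cannot be used to establish existence, which is what Theorem \ref{tLowDegree} asserts. It also does not even apply to most of these rows: for a surface of degree $d$ in $\p3$ one has $K_S=(d-4)h_S$, so for $d=6,7,8$ the polarization $\cO_S(h_S)$ is not of the form $\cO_S(\lambda K_S)$ with $\lambda$ a positive integer. The paper instead gets existence (of possibly very high rank) on arbitrary members from \cite{H--U--B}, rank $2$ on \emph{general} members from \cite{Bea1}, and for class (LI) builds a rank $4$ special bundle by tensoring a rank $2$ Ulrich bundle on the quartic with the twisted spinor bundle on the quadric via Theorem \ref{tUlrichTensor}---which is why those table entries read \lq\lq$\gg0$, generically $\le 2$ (or $\le 4$)\rq\rq\ rather than \lq\lq$2$\rq\rq. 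Second, your $\kappa(S)=0$ inventory is wrong: there are no abelian, Enriques or bielliptic surfaces of degree $\le 8$ (their minimal degree is $10$); the actual rows are K3 surfaces ((VIII), (XVIII), (IL)) and \emph{non-minimal} blow-ups of K3 surfaces ((XXVIII), (XLVIII)), and the latter two need separate arguments---\cite{Kim} for (XXVIII), and for (XLVIII) an explicit Serre-correspondence/Cayley--Bacharach construction of a rank $2$ bundle, which is one of the genuinely new pieces of the paper's proof.

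Third, you misplace the difficulty. Minimality is essentially free: a special Ulrich bundle has even rank by definition, so whenever a rank $2$ special bundle is produced it is automatically of minimal special rank, and no Bogomolov-type obstruction argument is needed anywhere in the paper. The delicate point is existence on the rational and ruled rows where the off-the-shelf theorems do not apply: the surfaces with $p_g=q=0$ (resp. $p_g=0$, $q=1$) and \emph{non-special} $\cO_S(h_S)$ are covered by \cite{Cs4} (resp. \cite{Cs5}), but classes (XXXI) and (XLIV) are specially polarized ($h^1\big(S,\cO_S(h_S)\big)=1$), and the paper must factor the embedding through an intermediate non-specially polarized surface and then apply \cite{Kim} to the remaining blow-up. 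Your proposal, as written, has no mechanism for these rows, nor for distinguishing which rows carry Ulrich \emph{line} bundles (the scroll, Bordiga, Castelnuovo and del Pezzo cases) from those that provably do not ((VII), (XLI), (XLV))---a distinction that Table A records and the proof must justify.
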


\begin{theorem}
\label{tLowWild}
Let $S\subseteq\p N$ be a surface of degree $d\le 8$ and $\kappa(S)\ne1$.

Then $S$ is Ulrich--wild if and only if either $d\ge5$, or $d\le4$ and  $\pi(S)\ge1$.
\end{theorem}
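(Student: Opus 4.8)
The plan is to combine the existence statement of Theorem~\ref{tLowDegree} (which already endows every surface in our range with an Ulrich bundle, via Table~A) with the numerical criterion of Theorem~\ref{tSurfaceWild} and the classical classification of surfaces of degree $d\le8$. Since existence is free, the entire content of Theorem~\ref{tLowWild} is to decide, entry by entry of that classification, whether arbitrarily large families occur. First I would record a numerical simplification: adjunction on a general hyperplane section $C\in\vert h_S\vert$ gives $h_SK_S=2\pi(S)-2-d$, so Inequality~\eqref{Bound1} is equivalent to
\begin{equation*}
d\big(\chi(\cO_S)+\pi(S)-3\big)>(\pi(S)-1)^2 .
\end{equation*}
This is the form I would test against each surface type.

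For the ``if'' direction one must show that a surface with $d\ge5$, or with $d\le4$ and $\pi(S)\ge1$, is Ulrich--wild. Running through the finite list of surfaces of degree $\le8$, the reformulated inequality holds for all but a short list of extremal cases, and Theorem~\ref{tSurfaceWild} then yields wildness at once; this covers, for instance, the surfaces of general type and all $K3$ surfaces in the range (for a $K3$ the inequality collapses to $d^2>0$). The genuine exceptions are: the surfaces with numerically trivial canonical class and $\chi(\cO_S)\le0$, namely abelian and bielliptic surfaces, handled by Example~\ref{eAbelian} and Proposition~6 of \cite{Bea3}; the del Pezzo surfaces ($\chi(\cO_S)=1$, $\pi(S)=1$), which are Ulrich--wild by \cite{PL--T} and \cite{MR--PL}; and the rational normal scrolls of degree $d\ge5$, where $\pi(S)=0$ and $\chi(\cO_S)=1$ so that \eqref{Bound1} fails for every $d$ and wildness must be produced by a direct construction.

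For the ``only if'' direction I would first identify the surfaces violating both conditions, i.e. those with $d\le4$ and $\pi(S)=0$. A rational hyperplane section forces $S$ to be rational with $\chi(\cO_S)=1$, and for $d\le4$ one checks that $S$ is then a two--dimensional variety of minimal degree: the plane (whose only Ulrich bundle is $\cO_{\p2}$), the smooth quadric, the cubic scroll, the Veronese surface, or one of the two quartic scrolls. For the first four, the Eisenbud--Herzog classification of finite Cohen--Macaulay representation type shows that, up to twist, they carry only finitely many indecomposable aCM---hence Ulrich---bundles, so no unbounded family can exist.

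The main obstacle is therefore the scrolls, at both ends of the range. The quartic scrolls are \emph{not} of finite Cohen--Macaulay type, yet must be shown Ulrich--tame; dually, the scrolls of degree $\ge5$ must be shown Ulrich--wild even though Theorem~\ref{tSurfaceWild} never applies to them. I would treat both through the ruling $S\to\p1$: restricting an Ulrich bundle to a fibre $f\cong\p1$ pins down its splitting type, and a relative Beilinson/monad description then encodes the bundle in bounded combinatorial data on the base. The crux is to prove that this data admits only bounded families when $d\le4$ but acquires enough free parameters to build unbounded families of pairwise non--isomorphic indecomposable Ulrich bundles exactly when $d\ge5$. Locating this transition precisely is the delicate heart of the argument.
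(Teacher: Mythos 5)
Your reformulation of Inequality \eqref{Bound1} as $d\big(\chi(\cO_S)+\pi(S)-3\big)>(\pi(S)-1)^2$ is correct, but the pivotal claim you build on it --- that the inequality ``holds for all but a short list of extremal cases'' --- is false, and the proposal collapses there. For a rational surface ($\chi(\cO_S)=1$) the inequality reads $d(\pi(S)-2)>(\pi(S)-1)^2$, which fails whenever $\pi(S)\le2$; for irregular ruled surfaces ($\chi(\cO_S)\le0$) it fails for every class of degree $\le8$. Concretely, besides the del Pezzo surfaces and the rational scrolls that you do list, Inequality \eqref{Bound1} fails for the Bordiga surface (IX) (where $\pi(S)=2$, so the inequality reads $0>1$), the conic bundles (XXIV) and (XXXIX), all the elliptic ruled classes (XII), (XVII), (XXVII), (XLIII), (XLIV), (XLV), the genus-$2$ scroll (XLVI), and the product $C\times\p1$ of class (XLVII) --- roughly half of the classification of degree $5\le d\le8$, and none of these appear among your exceptions. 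Moreover your ``abelian and bielliptic'' exception is vacuous: no abelian or bielliptic surface has degree $\le 8$ (an abelian surface has degree at least $10$), which indicates the case check was not actually run against the classification.

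The second gap is that even for the cases you do isolate you give a programme rather than a proof: the fibre-splitting/relative-monad argument for scrolls, which you yourself call the ``delicate heart'', is exactly the content a proof must supply, both for wildness of scrolls of degree $\ge5$ and for non-wildness of the quartic scrolls (your appeal to finite Cohen--Macaulay representation type is sound for the plane, the quadric, the cubic scroll and the Veronese, but the quartic scrolls are \emph{not} of finite CM type, so that route leaves them open). The paper closes all of these holes with very different, and much cheaper, tools: Theorem 1.3 of \cite{Cs4} treats at one stroke every non-special rational surface with $p_g(S)=q(S)=0$ --- Bordiga, conic bundles, del Pezzos and rational scrolls included --- in \emph{both} directions, giving wildness for those of degree $\ge5$ and non-wildness precisely for classes (I), (II), (IV), (VI), (VII); Theorem 1.3 of \cite{Cs5} handles the non-special elliptic ruled classes; and for the genus-$2$ scroll and $C\times\p1$ the paper argues directly: Proposition 5 of \cite{Bea3} yields an Ulrich line bundle $\mathcal L=\cO_S(h_S+p^*\mathfrak u)$, Proposition \ref{pUlrich} shows $\cM=\cO_S(2h_S+K_S-p^*\mathfrak u)$ is Ulrich too, and Riemann--Roch gives $h^1\big(S,\mathcal L\otimes\cM^\vee\big)\ge 8\ge3$, so Theorem \ref{tFPL} applies. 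Without either these external inputs or completed versions of your constructions, the proposal does not prove the theorem.
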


When $\kappa(S)=1$  we are not able to prove or disprove the existence of Ulrich bundles, though \cite{MR--PL2} provides some evidences in this direction.

In Section \ref{sGeneral} we list some results about Ulrich bundles. In Section \ref{sSurfaceWild} we prove Theorem \ref{tSurfaceWild}. In Section \ref{sUlrichTensor} we prove Theorem \ref{tUlrichTensor}.
In Section \ref{sLowDegree} we prove Theorems \ref{tLowDegree} and \ref{tLowWild}. 

For reader's benefit we incorporate in Table A below all the important informations about surfaces in $\p N$ of degree up to $8$. Their classification can be essentially found in \cite{Io2} and \cite{Io3}: see also \cite{Ok3} and the references therein. In \cite{Io3} and \cite{Ok3} the authors ruled out the case of irregular surfaces of degree $8$ in $\p4$ with different incomplete arguments. Such a case is completely described in the proof of Proposition 2.1 of \cite{A--D--S}: see also \cite{Ra}, Section 1.3 and Lemma 1.4 for some further details.

\begin{remark}
\label{rTable}
In Table A we use the following notation.

\begin{itemize}
\item $X_{d_1,\dots, d_{N-2}}$ denotes any complete intersection of hypersurfaces of degrees $d_1,\dots, d_{N-2}$ in $\p N$. In particular there is a natural rational map from a non--empty open subset of a Segre product $\prod_{i=1}^{N-2}\bP\left(H^0\big(\p N,\cO_{\p N}(d_i)\big)\right)$ to the Hilbert scheme whose image corresponds to the locus of points representing such surfaces.
\item Consider a ruled surface $X\cong\bP(\mathcal H)$, where $\mathcal H$ is a rank $2$ vector bundle on a curve $C$ which is normalized in the sense of Section V.2 of \cite{Ha2}. Then $p\colon X\to C$ denotes the canonical map, $e:=-\deg(\det(\mathcal H))$ the invariant of $X$, $\xi$ the class of the divisor $\cO_X(1)$ and $\mathfrak a f$ the class of the pull back of the divisor $\mathfrak a$ on the curve $C$ via $p$: if $a:=\deg(\mathfrak a)$ we will also write $a f$ for its numerical class in $\Num(X)$. If $C\cong\p1$ we set $\bF_e:=\bP(\cO_{\p1}\oplus\cO_{\p1}(-e))$, for each $e\ge0$. 
\item $\Bl_{P_1,\dots, P_t}X$ denotes the blow up of $X$ at the points $P_1,\dots, P_t$: in this case $\sigma$ denotes the blow up map, $e_i:=\sigma^{-1}(P_i)$.
\item The number  $r_{Ulrich}^{sp}$, if any, denotes the minimal rank of a special (not necessarily indecomposable) Ulrich bundle on $S$: in the case $S$ is also known to support or not Ulrich line bundles \lq$\exists$ line bundles\rq\ and \lq no line bundles\rq\ are added here. 
For  the details see the proof of Theorem \ref{tLowDegree}.

In two cases, denoted by the sentence \lq no results\rq\  in the last column in the table, we are unable to prove or disprove the existence of Ulrich bundles on $S$: these are exactly the cases when $\kappa(S)=1$ (see Remark \ref{kappa=1}).

\end{itemize}

\vfill\eject

\begin{small}
\centerline{\text{\it{Table A: Surfaces of degree $d\le8$ in $\p N$}}}
\vglue3truemm
\centerline{
\begin{sideways}
$\begin{array}{cccccccccc}        
\text{Class} &\text{Abstract model of $S$, $\cO_S(h_S)$} & d & \kappa &  p_g & q & K_S^2 &  h_SK_S & N & r_{Ulrich}^{sp}  \\ \hline \hline
\text{(I)} &X_{1}\cong\p2,\ \cO_{\p2}(1) & 1 & -\infty  & 0 & 0 & 9 & -3 & 2 & 2,\ \text{$\exists$ line bundles}\\ \hline
\text{(II)} &X_{2}\cong\p1\times\p1,\ \cO_{\p1}(1)\boxtimes\cO_{\p1}(1)& 2 & -\infty & 0 & 0 & 8 & -4 & 3 & 2,\ \text{$\exists$ line bundles} \\ \hline
\text{(III)} &X_{3}\cong\Bl_{P_1,\dots, P_6}\p2,\ \sigma^*\cO_{\p2}(3)\otimes\cO_S\left(-\sum_{i=1}^6e_i\right) & 3 & -\infty & 0 & 0 & 3 & -3 & 3 & 2,\ \text{$\exists$ line bundles}  \\ \hline
\text{(IV)} &\bF_1,\ \cO_S\left(\xi+2f\right)  & 3 & -\infty & 0 & 0 & 8 & -5 & 4 & 2,\ \text{$\exists$ line bundles}   \\ \hline
\text{(V)} &X_{2,2}\cong \Bl_{P_1,\dots,P_5}\p2,\ \sigma^*\cO_{\p2}(3)\otimes\cO_S\left(-\sum_{i=1}^5e_i\right) & 4 & -\infty & 0 & 0 & 4 & -4 & 4 & 2,\ \text{$\exists$ line bundles}\\ \hline
\text{(VI)} &\bF_e, \ e=0,2,\ \cO_S\left(\xi+\frac{e+4}2f\right) & 4 & -\infty & 0 & 0 & 8 & -6 & 5 & 2,\ \text{$\exists$ line bundles} \\ \hline
\text{(VII)} &\p2,\ \cO_{\p2}(2) & 4 & -\infty & 0 & 0 & 9 & -6 & 5 & 2,\ \text{no line bundles}    \\ \hline
\text{(VIII)} &X_{4},\ \cO_{\p3}(1)\otimes\cO_S & 4 & 0 &  1 & 0 & 0 & 0 & 3  & 2   \\ \hline
\text{(IX)} &\Bl_{P_1,\dots,P_{8}}\p2,\ \sigma^*\cO_{\p2}(4)\otimes\cO_S\left(-2e_1-\sum_{i=2}^{8}e_i\right)& 5 & -\infty & 0 & 0 & 2 & -3 & 4  & 2,\ \text{$\exists$ line bundles}  \\ \hline
\text{(X)} &\Bl_{P_1,\dots,P_4}\p2,\ \sigma^*\cO_{\p2}(3)\otimes\cO_S\left(-\sum_{i=1}^4e_i\right)& 5 & -\infty & 0 & 0 & 5 & -5 & 5 & 2,\ \text{$\exists$ line bundles}  \\ \hline
\text{(XI)} &\bF_e, \ e=1,3,\ \cO_S\left(\xi+\frac{e+5}2f\right) & 5 & -\infty & 0 & 0 & 8 & -7 & 6 & 2,\ \text{$\exists$ line bundles}  \\ \hline
\text{(XII)} &\text{elliptic ruled surface with $e=-1$},\ \cO_S\left(\xi+2f\right) & 5 & -\infty & 0 & 1 & 0 & -5 & 6 & 2,\ \text{$\exists$ line bundles}  \\ \hline
\text{(XIII)} &X_{5},\ \cO_{\p3}(1)\otimes\cO_S & 5 & 2 & 4 & 0 & 5 & 5 & 3 & \text{$\gg0$,  generically $\le2$} \\ \hline
\text{(XIV)} &\Bl_{P_1,\dots,P_{10}}\p2,\ \sigma^*\cO_{\p2}(4)\otimes\cO_S\left(-\sum_{i=1}^{10}e_i\right) & 6 & -\infty & 0 & 0 & -1 & -2 & 4 & 2,\ \text{$\exists$ line bundles}  \\ \hline
\text{(XV)} &\Bl_{P_1,\dots,P_3}\p2,\ \sigma^*\cO_{\p2}(3)\otimes\cO_S\left(-\sum_{i=1}^3e_i\right)& 6 & -\infty & 0 & 0 & 6 & -6 & 6 & 2,\ \text{$\exists$ line bundles}  \\ \hline
\text{(XVI)} &\bF_e, \ e=0,2,4,\ \cO_S\left(\xi+\frac{e+6}2f\right) & 6 & -\infty & 0 & 0 & 8 & -8 & 7 & 2,\ \text{$\exists$ line bundles}   \\ \hline
\text{(XVII)} &\text{elliptic ruled surface with $e=0$},\ \cO_S\left(\xi+3f\right) & 6 & -\infty & 0 & 1 & 0 & -6 & 5 & 2,\ \text{$\exists$ line bundles}  \\ \hline
\text{(XVIII)} &X_{2,3},\ \cO_{\p4}(1)\otimes\cO_S & 6 & 0 & 1 & 0 & 0 & 0 & 4 & \text{$2$}    \\ \hline
\text{(XIX)} &X_{6},\ \cO_{\p3}(1)\otimes\cO_S & 6 & 2 &  10 & 0 & 24 &12 & 3 & \text{$\gg0$,  generically $\le2$}  \\ \hline
\text{(XX)} &\Bl_{P_1,\dots,P_{11}}\p2,\ \sigma^*\cO_{\p2}(6)\otimes\cO_S\left(-2\sum_{i=1}^{6}e_i-\sum_{j=7}^{11}e_j\right)& 7 & -\infty & 0 & 0 & -2 & -1 & 4 &2   \\ \hline
\text{(XXI)} &\Bl_{P_1,\dots,P_{8}}\p2,\ \sigma^*\cO_{\p2}(6)\otimes\cO_S\left(-2\sum_{i=1}^{7}e_i-e_8\right)& 7 & -\infty & 0 & 0 & 1 & -3 & 5 &2  \\ \hline
\text{(XXII)} &\Bl_{P_1,\dots,P_{9}}\p2,\ \sigma^*\cO_{\p2}(4)\otimes\cO_S\left(-\sum_{i=1}^{9}e_i\right)& 7 & -\infty & 0 & 0 & 0 & -3 & 5 &2 \\ \hline
\text{(XXIII)} &{\Bl_{P_1,\dots,P_{9}}\bF_e,\ e=0,\dots,3,}  & & & & & & & & \\ 
&{\sigma^*\cO_{\bF_e}(2\xi+(4+e)f)\otimes\cO_S\left(-\sum_{i=1}^{9}e_i\right)}& 7 & -\infty & 0 & 0 & -1 & -3 & 5 & 2  \\ \hline
\text{(XXIV)} &\Bl_{P_1,\dots,P_{6}}\p2,\ \sigma^*\cO_{\p2}(4)\otimes\cO_S\left(-2e_1-\sum_{i=2}^{6}e_i\right)& 7 & -\infty & 0 & 0 & 3 & -5 & 6 &2 \\ \hline
\text{(XXV)} &\Bl_{P_1,P_2}\p2,\ \sigma^*\cO_{\p2}(3)\otimes\cO_S\left(-\sum_{i=1}^2e_i\right)& 7 & -\infty & 0 & 0 & 7 & -7 & 7 & 2,\ \text{$\exists$ line bundles} \\ \hline
\text{(XXVI)} &\bF_e, \ e=1,3,5,\ \cO_S\left(\xi+\frac{e+7}2f\right) & 7 & -\infty & 0 & 0 & 8 & -9 & 8 & 2,\ \text{$\exists$ line bundles} \\ \hline
\end{array}$
\end{sideways}
}

\centerline{\text{\it{Table A: Surfaces of degree $d\le8$ in $\p N$}}}
\vglue3truemm
\centerline{
\begin{sideways}
$\begin{array}{cccccccccc}        
\text{Class} &\text{Abstract model of $S$, $\cO_S(h_S)$} & d & \kappa &  p_g & q & K_S^2 &  h_SK_S & N & r_{Ulrich} \\ \hline\hline
\text{(XXVII)} &\text{elliptic ruled surface with $e=-1, 1$},\ \cO_S\left(\xi+(4+\lceil \frac e2\rceil)f\right) & 7 & -\infty & 0 & 1 & 0 & -7 & 8 & 2,\ \text{$\exists$ line bundles} \\ \hline
\text{(XXVIII)} & \Bl_{P_1}X_{2,2,2},\ \sigma^*\cO_X(h_X)\otimes\cO_S\left(-e_1\right)& 7 & 0 & 1 & 0 & -1 & 1 & 4 &  2  \\ \hline
\text{(XXIX)} &\text{Proper elliptic},\ \cO_{\p4}(1)\otimes\cO_S & 7 & 1 & 2 & 0 & 0 & 3 & 4  & \text{no results} \\ \hline
\text{(XXX)} &X_{7} & 7 & 2 & 20 & 0 & 63 & 21 &3  & \text{$\gg0$,  generically $2$}  \\ \hline
\text{(XXXI)} &\Bl_{P_1,\dots,P_{16}}\p2,\ \sigma^*\cO_{\p2}(6)\otimes\cO_S\left(-2\sum_{i=1}^4e_i-\sum_{j=5}^{16}e_j\right)& 8 & -\infty & 0 & 0 & -7 & 2 & 4 & 2   \\ \hline
\text{(XXXII)} &\Bl_{P_1,\dots,P_{12}}\bF_e,\ e=0,\dots,4,  & & & & & & & & \\  
&\sigma^*\cO_{\bF_e}(2\xi+(5+e)f)\otimes\cO_S\left(-\sum_{i=1}^{12}e_i\right)& 8 & -\infty & 0 & 0 & -4 & -2 & 5 & 2  \\ \hline
\text{(XXXIII)} &\Bl_{P_1,\dots,P_{10}}\p1\times\p1,\ \sigma^*(\cO_{\p1}(3)\boxtimes\cO_{\p1}(3))\otimes\cO_S\left(-\sum_{i=1}^{10}e_i\right)& 8 & -\infty & 0 & 0 & -2 & -2 & 5 & 2 \\ \hline
\text{(XXXIV)} &\Bl_{P_1,\dots,P_{11}}\p2,\  \sigma^*\cO_{\p2}(7)\otimes\cO_S\left(-2\sum_{i=1}^{10}e_i-e_{11}\right)& 8 & -\infty & 0 & 0 & -2 & 0 & 4 & 2  \\ \hline
\text{(XXXV)} &\Bl_{P_1,\dots,P_{10}}\p2,\  \sigma^*\cO_{\p2}(6)\otimes\cO_S\left(-2\sum_{i=1}^6e_i-\sum_{j=7}^{10}e_j\right)& 8 & -\infty & 0 & 0 & -1 & -2 & 5 & 2 \\ \hline
\text{(XXXVI)} &\Bl_{P_1,\dots,P_{8}}\bF_e,\ e=0,\dots,3,  & & & & & & & &  \\   
&\sigma^*\cO_{\bF_e}(2\xi+(4+e)f)\otimes\cO_S\left(-\sum_{i=1}^{8}e_i\right)& 8 & -\infty & 0 & 0 & 0 & -4 &  6 & 2  \\ \hline
\text{(XXXVII)} &\Bl_{P_1,\dots,P_{8}}\p2,\ \sigma^*\cO_{\p2}(4)\otimes\cO_S\left(-\sum_{i=1}^{8}e_i\right)& 8 & -\infty & 0 & 0 & 1 & -4 & 6 & 2  \\ \hline
\text{(XXXVIII)} &\Bl_{P_1,\dots,P_{7}}\p2,\ \sigma^*\cO_{\p2}(6)\otimes\cO_S\left(-2\sum_{i=1}^{7}e_i\right)& 8 & -\infty & 0 & 0 & 2 & -4 & 6 & 2  \\ \hline
\text{(XXXIX)} &\Bl_{P_1,\dots,P_{5}}\p2,\ \sigma^*\cO_{\p2}(4)\otimes\cO_S\left(-2e_1-\sum_{i=2}^{5}e_i\right)& 8 & -\infty & 0 & 0 & 4 & -6 & 7 & 2  \\ \hline
\text{(XL)} &\bF_e, \ e=0,2,4,6,\ \cO_S\left(\xi+\frac{e+8}2f\right) & 8 & -\infty & 0 & 0 & 8 & -10 & 9 & 2,\ \text{$\exists$ line bundles}   \\ \hline
\text{(XLI)} &\Bl_{P_1}\p2,\ \sigma^*\cO_{\p2}(3)\otimes\cO_S\left(-e_1\right)& 8 & -\infty & 0 & 0 & 8 & -8 & 8 & 2,\ \text{no line bundles} \\ \hline
\text{(XLII)} &\p1\times\p1,\ \cO_{\p1}(2)\boxtimes\cO_{\p1}(2)& 8 & -\infty & 0 & 0 & 8 & -8 & 8 & 2,\ \text{$\exists$ line bundles}    \\ \hline
\text{(XLIII)} &\text{elliptic ruled surface with $e=0, 2$, $\cO_S\left(\xi+(4+\frac e2)f\right)$} & 8 & -\infty & 0 & 1& 0 & -8 & 7 & 2,\ \text{$\exists$ line bundles} \\ \hline
\text{(XLIV)} &\text{$\Bl_{P_1,\dots,P_8}X$, $X$ elliptic ruled surface with $e=-1,1$,} & & & & & & & & \\ 
&\text{$\sigma^*\cO_X(2\xi+(4+e)f)\otimes\cO_S\left(-\sum_{i=1}^8e_i\right)$} & 8 & -\infty & 0 & 1& -8 & 0 & 4 & 2  \\ \hline
\text{(XLV)} &\text{elliptic ruled surface with $e=-1$, $\cO_S\left(2\xi+f\right)$}& 8 & -\infty & 0 & 1 & 0 & 4 & 5 & 2,\ \text{no line bundles}  \\ \hline
\text{(XLVI)} &\text{scroll with $e=-2$ over a curve $C$ of genus 2, $\cO_S\left(\xi+3f\right)$} & 8 & -\infty & 0 & 2 & -8 & -6 & 5 & 2,\ \text{$\exists$ line bundles}  \\ \hline
\text{(XLVII)} &\text{$C\times\p1$, $C\subseteq\p2$ smooth with $\deg(C)=4$, $\cO_C(1)\boxtimes\cO_{\p1}(1)$}& 8 & -\infty & 0 & 3 & -16 & -4 & 5 & 2,\ \text{$\exists$ line bundles}  \\ \hline
\text{(XLVIII)} & \text{$\Bl_{P_1}X$, $X\subseteq\p7$ $K3$ surface, $\sigma^*\cO_X(h_X)\otimes\cO_S\left(-2e_1\right)$}& 8 & 0 & 1& 0 & -1 &2 & 4 & 2  \\ \hline
\text{(IL)} &K3,\ \cO_{\p5}(1)\otimes\cO_S& 8 & 0 & 1 & 0 & 0 & 0 & 5 & \text{$2$}  \\ \hline
\text{(L)} &\text{Proper elliptic},\ \cO_{\p4}(1)\otimes\cO_S & 8 & 1 & 2 & 0 & 0 & 4 & 4 &  \text{no results}     \\ \hline
\text{(LI)} &X_{2,4},\ \cO_{\p4}(1)\otimes\cO_S & 8 & 2 & 5 & 0 & 8 & 8 & 4 & \text{$\gg0$,  generically $\le4$}  \\ \hline
\text{(LII)} &X_{8},\ \cO_{\p3}(1)\otimes\cO_S & 8 & 2 & 35 & 0 & 128 & 32 & 3 & \text{$\gg0$,  generically $2$}   \\ \hline
\end{array}$
\end{sideways}
}

\end{small}

\medbreak
\end{remark}

\section{General results}
\label{sGeneral}
Let $\cE$ be an Ulrich sheaf on a variety $X\subseteq\p N$ with $\dim(X)=n$. As pointed out in Section 2 of \cite{E--S--W} (see in particular Proposition 2.1 therein) we know that $\cE$ is aCM and $E:=\bigoplus_{t\in\bZ}H^0\big(X,\cE(th_X)\big)$ has a minimal free resolution over $P:=\field[x_0,\dots,x_N]$ of the form 
\begin{equation}
\label{seqLinear}
\begin{aligned}
0\longrightarrow P(n-N)^{\oplus\alpha_{N-n}}&\longrightarrow P(n-N+1)^{\oplus\alpha_{N-n-1}}\longrightarrow\dots\\
&\longrightarrow P(-1)^{\oplus\alpha_1}\longrightarrow P^{\oplus\alpha_0}\longrightarrow E\longrightarrow0.
\end{aligned}
\end{equation}

Ulrich bundles also behave well with respect to the notions of (semi)stability and $\mu$--(semi)stability. For each coherent sheaf $\cF$ on $X$, the slope $\mu(\cF)$ and the reduced Hilbert polynomial $p_{\cF}(t)$ (with respect to $\cO_X(h_X)$) are defined as follows:
$$
\mu(\cF)= c_1(\cF)h_X^{\dim (X)-1}/\rk(\cF), \qquad p_{\cF}(t)=\chi(\cF(th_X))/\rk(\cF).
$$
The coherent sheaf $\cF$ is {\sl $\mu$--semistable} (resp. {\sl $\mu$--stable}) if for all subsheaves
$\mathcal G$ with $0<\rk(\mathcal G)<\rk(\cF)$ we have $\mu(\mathcal G) \le \mu(\cF)$ (resp. $\mu(\mathcal G)< \mu(\cF)$).

The coherent sheaf $\cF$ is called {\sl semistable} (resp. {\sl stable}) if for all proper non--zero subsheaves $\mathcal G$, $p_{\mathcal G}(t) \le  p_{\cF}(t)$ (resp. $p_{\mathcal G}(t) <  p_{\cF}(t)$) for $t\gg0$. We recall that in order to check the semistability and stability of $\cF$ one can restrict the attention to subsheaves with torsion--free quotient. The following chain of implications holds for $\cF$:
$$
\text{$\cF$ is $\mu$--stable}\ \Rightarrow\ \text{$\cF$ is stable}\ \Rightarrow\ \text{$\cF$ is semistable}\ \Rightarrow\ \text{$\cF$ is $\mu$--semistable.}
$$
We revert below some of the above implications.

\begin{theorem}
\label{tUnstable}
Let $X\subseteq\p N$ be a smooth variety.
If $\cE$ is a Ulrich bundle on $X$ the following assertions hold.
\begin{enumerate}
\item $\cE$ is semistable and $\mu$--semistable.
\item $\cE$ is stable if and only if it is $\mu$--stable.
\item If
\begin{equation*}
\label{seqUnstable}
0\longrightarrow\mathcal \cG\longrightarrow\cE\longrightarrow\mathcal H\longrightarrow0
\end{equation*}
is an exact sequence of coherent sheaves with $\mathcal H$ torsion free and $\mu(\mathcal G)=\mu(\cE)$, then both $\mathcal G$ and $\mathcal H$ are Ulrich bundles.
\end{enumerate}
\end{theorem}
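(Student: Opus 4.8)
The plan rests on two standard facts about an Ulrich bundle $\cE$ of rank $r$ on the smooth $n$--dimensional variety $X\subseteq\p N$ of degree $d=h_X^n$, both following from the linear resolution \eqref{seqLinear} and from Proposition 2.1 of \cite{E--S--W}: first, the reduced Hilbert polynomial is the $\cE$--independent quantity $p_\cE(t)=d\binom{t+n}{n}$; second, for a general finite linear projection $\pi\colon X\to\p n$ one has $\pi_*\cE\cong\cO_{\p n}^{\oplus rd}$, and conversely a coherent sheaf $\cF$ on $X$ is Ulrich exactly when $\pi_*\cF$ is trivial. Since $\pi$ is finite, $\pi_*$ is exact, multiplies ranks by $d$, and (as $\cO_X(h_X)=\pi^*\cO_{\p n}(1)$) preserves Euler characteristics, so $\chi\big(X,\cF(th_X)\big)=\chi\big(\p n,(\pi_*\cF)(t)\big)$ for every $\cF$. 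I would prove the parts in the order (1), (3), (2).

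For (1), I take any subsheaf $\cG\subseteq\cE$ with torsion--free quotient and push it forward to an inclusion $\pi_*\cG\subseteq\pi_*\cE=\cO_{\p n}^{\oplus rd}$. As $\cO_{\p n}^{\oplus rd}$ is polystable, in particular semistable, we get $p_{\pi_*\cG}(t)\le\binom{t+n}{n}$; combining this with $\rk(\pi_*\cG)=d\,\rk(\cG)$ and the equality of Euler characteristics yields $p_\cG(t)=d\,p_{\pi_*\cG}(t)\le d\binom{t+n}{n}=p_\cE(t)$. Hence $\cE$ is semistable, and therefore $\mu$--semistable by the implication chain recalled before the statement.

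For (3), additivity of rank and $c_1$ together with $\mu$--semistability and $\mu(\cG)=\mu(\cE)$ first force $\mu(\mathcal H)=\mu(\cE)$. Comparing the coefficients of $t^{n-1}$ in the two expressions for $\chi\big(X,\cG(th_X)\big)=\chi\big(\p n,(\pi_*\cG)(t)\big)$ via Riemann--Roch gives an affine relation between $\mu(\cG)$ and $\mu(\pi_*\cG)$, showing that $\mu(\cG)=\mu(\cE)$ is equivalent to $\mu(\pi_*\cG)=0=\mu(\cO_{\p n}^{\oplus rd})$; since $\Num(\p n)=\bZ$ this forces $c_1(\pi_*\cG)=0$, and hence $c_1(\pi_*\mathcal H)=0$ from the pushed--forward sequence $0\to\pi_*\cG\to\cO_{\p n}^{\oplus rd}\to\pi_*\mathcal H\to0$. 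Now $\pi_*\mathcal H$ is globally generated (a quotient of a trivial bundle), torsion--free (because $\mathcal H$ is and $\pi$ is finite), with $c_1=0$; the key lemma is that such a sheaf is trivial, since global generation yields a morphism to a Grassmannian pulling $\cO_{\mathrm{Gr}}(1)$ back to $\det(\pi_*\mathcal H)=\cO_{\p n}$, so that morphism is constant and, after handling the possible non--locally--free locus via reflexive hulls, $\pi_*\mathcal H\cong\cO_{\p n}^{\oplus f}$. Thus $\mathcal H$ is Ulrich; and $\pi_*\cG$ is the kernel of a surjection $\cO_{\p n}^{\oplus rd}\to\cO_{\p n}^{\oplus f}$ of trivial bundles, given by a constant matrix of everywhere--maximal rank, so it too is trivial and $\cG$ is Ulrich. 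As $X$ is smooth, both are Ulrich bundles.

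Finally (2): the implication ``$\mu$--stable $\Rightarrow$ stable'' is general. For the converse, suppose $\cE$ is stable but not $\mu$--stable; by (1) it is $\mu$--semistable, so there is a saturated subsheaf $\cG$ with $0<\rk(\cG)<\rk(\cE)$, torsion--free quotient, and $\mu(\cG)=\mu(\cE)$. Part (3) makes $\cG$ Ulrich, whence $p_\cG(t)=d\binom{t+n}{n}=p_\cE(t)$, contradicting the strict inequality demanded by stability; thus stable implies $\mu$--stable, completing the equivalence. The main obstacle is precisely the heart of (3): converting the numerical hypothesis $\mu(\cG)=\mu(\cE)$ into the structural conclusion that the pushforwards are \emph{trivial}. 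The slope--to--$c_1$ translation is a routine Riemann--Roch comparison, but the triviality lemma for globally generated sheaves with $c_1=0$, and the care needed to descend from the locally free locus to all of $\p n$ when $\pi_*\mathcal H$ is merely torsion--free, carry the real content; everything else is formal once the two framing facts about $\pi_*$ are in hand.
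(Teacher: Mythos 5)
Your proof is correct, but note that the paper offers no argument of its own for this statement: its entire proof is the line \lq\lq See Theorem 2.9 of \cite{C--H2}\rq\rq. What you have written is, in substance, a reconstruction of the proof of that cited theorem: Casanellas--Hartshorne likewise work through a general finite linear projection $\pi\colon X\to\p n$, use the characterization that a coherent sheaf $\cF$ is Ulrich precisely when $\pi_*\cF$ is trivial, obtain (1) from the semistability of $\cO_{\p n}^{\oplus rd}$ together with the affine relation between $p_{\cF}$ and $p_{\pi_*\cF}$, and obtain (3) by showing the pushforwards have $c_1=0$ and invoking the triviality of a globally generated torsion-free sheaf on $\p n$ with trivial determinant; (2) then follows formally, exactly as you argue. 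The only step you compress that carries real content is the one you flagged: since $\pi_*\mathcal H$ is a priori only torsion-free, the Grassmannian map is defined only on the complement $U$ of a closed set of codimension $\ge2$, where its constancy follows not from properness but from $H^0(U,\cO_U)=\field$; this gives an inclusion $\pi_*\mathcal H\subseteq j_*\big((\pi_*\mathcal H)\vert_U\big)\cong\cO_{\p n}^{\oplus f}$, $j\colon U\hookrightarrow\p n$ being the inclusion, and one must then use global generation a second time (sections of $\pi_*\mathcal H$ are constant vectors in $\field^{\oplus f}$, so the evaluation image is a trivial subbundle) to rule out proper subsheaves such as $\cI_{p\vert\p n}\subseteq\cO_{\p n}$, which have $c_1=0$ and agree with $\cO_{\p n}$ off codimension $2$ yet are not trivial. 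With that detail filled in, all three parts are sound.
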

\begin{proof}
See Theorem 2.9 of \cite{C--H2}.
\end{proof}

Thus, Ulrich bundles on $X$ of minimal rank (if any) are both stable and $\mu$--stable. It is interesting to estimate
the size of the families of Ulrich bundles.

\begin{theorem}
\label{tFPL}
Let $X$ be a smooth variety endowed with a very ample line bundle $\cO_X(h_X)$.
If $\cA$ and $\cB$ are simple Ulrich bundles on $X$ such that $h^1\big(X,\cA\otimes\cB^\vee\big)\ge3$ and every non--zero morphism $\cA\to\cB$ is an isomorphism, then $X$ is Ulrich--wild.
\end{theorem}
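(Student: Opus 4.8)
The statement is a classical criterion for wild representation type, going back to the work on Kronecker-type modules. Here is what I would prove: given two simple Ulrich bundles $\cA,\cB$ with $h^1(X,\cA\otimes\cB^\vee)\ge3$ and such that every non-zero morphism $\cA\to\cB$ is an isomorphism, I want to produce, for each $p$, a $p$-dimensional family of pairwise non-isomorphic, indecomposable Ulrich bundles. The plan is to construct these bundles as extensions of copies of $\cA$ by copies of $\cB$ and to control their endomorphism algebras.

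\medbreak
\textbf{Step 1: set up the extension space.} Write $e:=h^1(X,\cA\otimes\cB^\vee)=\dim\ext^1(\cA,\cB)\ge3$. Since $\cA,\cB$ are Ulrich bundles and $\mu(\cA)=\mu(\cB)$ (both Ulrich bundles on $X$ have the same slope, by the shape of the linear resolution \eqref{seqLinear}), any extension
\begin{equation*}
0\longrightarrow \cB^{\oplus n}\longrightarrow \cE\longrightarrow \cA^{\oplus m}\longrightarrow 0
\end{equation*}
is again an Ulrich bundle: it is an extension of Ulrich bundles, hence Ulrich by the additivity of the defining cohomology vanishing, and it is locally free as an extension of bundles. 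This is where I use that $\cA$ and $\cB$ are genuine bundles and that the Ulrich condition is closed under extensions.

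\medbreak
\textbf{Step 2: encode families of extensions via matrices and exploit $e\ge 3$.} The extension class of the sequence above lives in $\ext^1(\cA^{\oplus m},\cB^{\oplus n})\cong \ext^1(\cA,\cB)\otimes \Hom(\field^m,\field^n)$, i.e. it is an $n\times m$ matrix with entries in the $e$-dimensional space $\ext^1(\cA,\cB)$. Because $e\ge3$, I can fix a basis $\varepsilon_1,\varepsilon_2,\varepsilon_3$ of a three-dimensional subspace and build, for parameters, matrix pencils of the form $\varepsilon_1\otimes I + \varepsilon_2\otimes A + \varepsilon_3\otimes B$ where $A,B$ are $n\times n$ matrices. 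The indecomposability and the non-isomorphism of the resulting bundles $\cE$ will be reduced to the classification of pairs of matrices $(A,B)$ up to simultaneous conjugation, which is the standard wild problem: the variety of such pairs contains families of arbitrarily large dimension of pairwise non-equivalent indecomposable pairs. I would invoke this reduction rather than reprove it.

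\medbreak
\textbf{Step 3: compute $\Hom(\cE,\cE)$ using the two hypotheses.} This is the crux. Applying $\Hom(-,\cE)$ and $\Hom(\cE,-)$ to the defining sequence and using that $\cA,\cB$ are simple with every non-zero map $\cA\to\cB$ an isomorphism, I would show that an endomorphism of $\cE$ is determined, modulo nilpotents coming from $\ext^1$, by its induced endomorphisms on $\cA^{\oplus m}$ and $\cB^{\oplus n}$; the hypotheses force these to be compatible with the chosen extension matrices, i.e. to commute with $A$ and $B$. The condition that every non-zero $\cA\to\cB$ is an isomorphism is what rules out ``off-diagonal'' homomorphisms that would otherwise split the endomorphism algebra, so that $\End(\cE)$ becomes a local ring precisely when $(A,B)$ is an indecomposable pair. \textbf{The main obstacle} is exactly this homological bookkeeping: I must show that the natural map from endomorphisms of the pair $(A,B)$ to endomorphisms of $\cE$ is surjective up to the nilpotent radical, which requires a careful vanishing/simplicity argument for $\Hom(\cA,\cA)$, $\Hom(\cB,\cB)$, $\Hom(\cA,\cB)$ and $\Hom(\cB,\cA)$.

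\medbreak
\textbf{Step 4: conclude wildness.} Once $\End(\cE)$ is local exactly for indecomposable pairs and two bundles $\cE,\cE'$ are isomorphic only if the corresponding pairs are equivalent, the arbitrarily large families of non-equivalent indecomposable pairs of matrices from Step 2 transport to arbitrarily large families of pairwise non-isomorphic indecomposable Ulrich bundles, which is the definition of Ulrich--wild. I would remark that this is the standard argument reducing wildness to the wildness of the Kronecker problem with $\ge3$ arrows, and cite the appropriate source for the matrix-problem input.
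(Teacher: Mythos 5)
Your overall strategy is, in substance, the one behind the result the paper actually invokes: the paper's own proof of this theorem is nothing but a citation of Theorem A, Corollary 2.1 and Remark 1.6 iii) of \cite{F--PL}, together with the observation (via Lemma 2.6 and Theorem 2.9 of \cite{C--H2}) that Ulrich bundles are semistable with equal reduced Hilbert polynomial, so that the cited result applies. You are instead reproving the black box by the extension/Kronecker argument, which is legitimate and is indeed how \cite{F--PL} proceed; but your execution contains an orientation error that is fatal as written, beyond the fact that you explicitly defer the crux (your Step 3).

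Here is the error. Since $\cB$ is locally free, $\cA\otimes\cB^\vee\cong\cB^\vee\otimes\cA$ is the sheaf of homomorphisms from $\cB$ to $\cA$, so $h^1\big(X,\cA\otimes\cB^\vee\big)=\dim\ext^1(\cB,\cA)$, the space of extensions \emph{of $\cB$ by $\cA$} --- not $\dim\ext^1(\cA,\cB)$ as you assert in Step 1. You then build extensions $0\to\cB^{\oplus n}\to\cE\to\cA^{\oplus m}\to0$, whose classes live in $\ext^1(\cA,\cB)\otimes\mathrm{Hom}(\field^m,\field^n)$, a space about whose dimension the hypotheses say nothing; so Step 2 has no input. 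The mismatch also poisons Step 3: for an endomorphism of your $\cE$ to preserve the subsheaf $\cB^{\oplus n}$ (and hence induce an endomorphism of the matrix pair), what must vanish is the composite $\cB^{\oplus n}\hookrightarrow\cE\to\cE\twoheadrightarrow\cA^{\oplus m}$, i.e. you need $\mathrm{Hom}(\cB,\cA)=0$; the hypothesis instead controls morphisms $\cA\to\cB$, which in your orientation are quotient-to-sub maps producing only square-zero endomorphisms (harmless, inside the radical). And $\mathrm{Hom}(\cB,\cA)=0$ genuinely does not follow from what you have: take $\cS$ stable, not isomorphic to $\cA$, with the same reduced Hilbert polynomial and $\ext^1(\cA,\cS)\ne0$, and let $\cB$ be a non-split extension $0\to\cS\to\cB\to\cA\to0$; then $\cA,\cB$ are simple and semistable with equal reduced Hilbert polynomial, $\mathrm{Hom}(\cA,\cB)=0$ (so your morphism hypothesis holds vacuously), yet the projection $\cB\to\cA$ is a non-zero element of $\mathrm{Hom}(\cB,\cA)$. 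Everything is repaired simultaneously by flipping the extension: with $0\to\cA\otimes U\to\cE\to\cB\otimes V\to0$ the classes sweep out $\ext^1(\cB,\cA)\otimes\mathrm{Hom}(V,U)=H^1\big(X,\cA\otimes\cB^\vee\big)\otimes\mathrm{Hom}(V,U)$, which is exactly where the hypothesis $h^1\ge3$ enters, and the sub-to-quotient morphisms are exactly the maps $\cA\to\cB$ that the second hypothesis governs. Note finally that this hypothesis allows $\cA\cong\cB$, in which case off-diagonal morphisms are \emph{not} ruled out (contrary to your claim in Step 3) and must instead be shown to lie in the radical; this self-extension case needs its own argument, which on the paper's route is covered by Corollary 2.1 of \cite{F--PL}.
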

\begin{proof}
See \cite[Theorem A, Corollary 2.1 and Remark 1.6 iii)]{F--PL}. Indeed $\mathcal A$ and $\mathcal B$, being Ulrich, satisfy $p_{\mathcal A}(t)=p_{\mathcal B}(t)$ by \cite[Lemma 2.6]{C--H2} and are semistable by \cite[Theorem 2.9]{C--H2}.
\end{proof}

From now on we restrict our attention to Ulrich bundles $\cE$ on a surface $S$. The Serre duality for $\cE$ is
$h^i\big(S,\cE\big)=h^{2-i}\big(S,\cE^\vee(K_S)\big)$, for $i=0,1,2$.
The Riemann--Roch theorem  for $\cE$ is
\begin{equation}
\label{RRGeneral}
h^0\big(S,\cE\big)+h^{2}\big(S,\cE\big)=\rk(\cE)\chi(\cO_S)+\frac{c_1(\cE)(c_1(\cE)-K_S)}2-c_2(\cE)+h^{1}\big(S,\cE\big),
\end{equation}
where $\chi(\cO_S):=1-q(S)+p_g(S)$.

\begin{proposition}
\label{pUlrich}
Let $S$ be a surface endowed with a very ample line bundle  $\cO_S(h_S)$ and let $d:=h_S^2$.
If $\cE$ is a vector bundle on $S$, then the following assertions are equivalent:
\begin{enumerate}
\item $\cE$ is an Ulrich bundle;
\item $\cE^\vee(3h_S+K_S)$ is an Ulrich bundle;
\item $\cE$ is an aCM bundle and 
\begin{equation}
\label{eqUlrich}
\begin{gathered}
c_1(\cE)h_S=\frac{\rk(\cE)}2(3d+h_SK_S),\\ 
c_2(\cE)=\frac{1}2(c_1(\cE)^2-c_1(\cE)K_S)-\rk(\cE)(d-\chi(\cO_S));
\end{gathered}
\end{equation}
\item $h^0\big(S,\cE(-h_S)\big)=h^0\big(S,\cE^\vee(2h_S+K_S)\big)=0$ and Equalities \eqref{eqUlrich} hold.
\end{enumerate}
\end{proposition}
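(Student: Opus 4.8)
The plan is to read off what the Ulrich condition means on the surface $S$ and then convert the remaining cohomological vanishings into a Riemann--Roch computation. First I would note that for the $2$--dimensional $S$ the defining equalities, taken with $i\in\{1,2\}$ and $j\in\{0,1\}$, reduce to the four vanishings
$$
h^0\big(S,\cE(-h_S)\big)=h^1\big(S,\cE(-h_S)\big)=h^1\big(S,\cE(-2h_S)\big)=h^2\big(S,\cE(-2h_S)\big)=0.
$$
Two elementary facts will be used repeatedly: Serre duality in the shape $h^i\big(S,\cE(th_S)\big)=h^{2-i}\big(S,\cE^\vee(-th_S+K_S)\big)$, and the observation that, since $S$ is integral and $\cE$ (hence $\cE^\vee$) is locally free, multiplication by a nonzero section of $\cO_S(h_S)$ is injective on global sections, giving $h^0\big(S,\cF(-h_S)\big)\le h^0\big(S,\cF\big)$ for every bundle $\cF$.

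For (1)$\Leftrightarrow$(2) I would apply Serre duality to the four Ulrich vanishings of $\cF:=\cE^\vee(3h_S+K_S)$. Since $\cF(-h_S)=\cE^\vee(2h_S+K_S)$ and $\cF(-2h_S)=\cE^\vee(h_S+K_S)$, dualizing turns each of these four conditions into one of the four conditions for $\cE$, only in a permuted order; hence $\cE$ is Ulrich if and only if $\cF$ is.

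The computational core is the equivalence of the two relations \eqref{eqUlrich} with the single identity
$$
\chi\big(\cE(th_S)\big)=\frac{\rk(\cE)d}{2}(t+1)(t+2).
$$
To establish it I would expand \eqref{RRGeneral} for the twist $\cE(th_S)$, inserting $c_1(\cE(th_S))=c_1(\cE)+t\,\rk(\cE)h_S$ and $c_2(\cE(th_S))=c_2(\cE)+(\rk(\cE)-1)t\,c_1(\cE)h_S+\binom{\rk(\cE)}{2}t^2d$. The coefficient of $t^2$ is always $\rk(\cE)d/2$; equating the coefficient of $t$, namely $c_1(\cE)h_S-\frac{\rk(\cE)}{2}(h_SK_S)$, with $\frac{3\rk(\cE)d}{2}$ reproduces the first relation of \eqref{eqUlrich}, and equating the constant term with $\rk(\cE)d$ reproduces the second. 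Since a quadratic with fixed leading coefficient is determined by its values at two points, \eqref{eqUlrich} is equivalent to $\chi\big(\cE(-h_S)\big)=\chi\big(\cE(-2h_S)\big)=0$. This Chern--class bookkeeping is the only step demanding care, and I expect it to be the main (if routine) obstacle.

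The three remaining implications are then short. For (1)$\Rightarrow$(3) I would recall that an Ulrich bundle is aCM (noted in the Introduction) and use Serre duality with the step--down inequality to upgrade the four Ulrich vanishings to the full $H^\bullet\big(S,\cE(-h_S)\big)=H^\bullet\big(S,\cE(-2h_S)\big)=0$; both Euler characteristics then vanish, which by the previous paragraph is \eqref{eqUlrich}. For (3)$\Rightarrow$(4), aCM kills $h^1\big(S,\cE(-h_S)\big)$ and $h^1\big(S,\cE(-2h_S)\big)$, so the two vanishing Euler characteristics force $h^0$ and $h^2$ of each twist to vanish; in particular $h^0\big(S,\cE(-h_S)\big)=0$ and, by Serre duality, $h^0\big(S,\cE^\vee(2h_S+K_S)\big)=h^2\big(S,\cE(-2h_S)\big)=0$. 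Finally, for (4)$\Rightarrow$(1) the two hypotheses of (4) give $h^0\big(S,\cE(-h_S)\big)=0$ and $h^2\big(S,\cE(-2h_S)\big)=0$, while the step--down inequality yields $h^0\big(S,\cE(-2h_S)\big)=0$ and $h^2\big(S,\cE(-h_S)\big)=0$; since \eqref{eqUlrich} makes $\chi\big(\cE(-h_S)\big)=\chi\big(\cE(-2h_S)\big)=0$, the only surviving terms $h^1\big(S,\cE(-h_S)\big)$ and $h^1\big(S,\cE(-2h_S)\big)$ are forced to vanish, giving the last two Ulrich conditions and closing the cycle (1)$\Rightarrow$(3)$\Rightarrow$(4)$\Rightarrow$(1).
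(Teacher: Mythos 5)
Your proof is correct, and it is worth noting that the paper itself does not prove Proposition \ref{pUlrich} at all: it simply cites Proposition 2.1 of \cite{Cs4}. So you have supplied a self-contained argument where the paper defers to a reference. Your route is the standard one and almost certainly parallels the cited proof: you translate the Ulrich condition on a surface into the four vanishings $h^0\big(S,\cE(-h_S)\big)=h^1\big(S,\cE(-h_S)\big)=h^1\big(S,\cE(-2h_S)\big)=h^2\big(S,\cE(-2h_S)\big)=0$, observe via the twisted Riemann--Roch expansion that Equalities \eqref{eqUlrich} are exactly the statement $\chi\big(\cE(-h_S)\big)=\chi\big(\cE(-2h_S)\big)=0$ (equivalently $\chi\big(\cE(th_S)\big)=\rk(\cE)\,d\,\binom{t+2}{2}$, which is the Hilbert--polynomial characterization in \cite{E--S--W}), and then close the cycle $(1)\Rightarrow(3)\Rightarrow(4)\Rightarrow(1)$ using Serre duality and injectivity of multiplication by a section of $\cO_S(h_S)$, with $(1)\Leftrightarrow(2)$ falling out of Serre duality since the four Ulrich vanishings for $\cE^\vee(3h_S+K_S)$ are a permutation of those for $\cE$. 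I checked the two delicate points — the Chern--class bookkeeping ($c_2(\cE(th_S))=c_2(\cE)+(\rk(\cE)-1)t\,c_1(\cE)h_S+\binom{\rk(\cE)}{2}t^2d$, giving linear coefficient $c_1(\cE)h_S-\tfrac{\rk(\cE)}{2}h_SK_S$ and constant term $\rk(\cE)\chi(\cO_S)+\tfrac12\big(c_1(\cE)^2-c_1(\cE)K_S\big)-c_2(\cE)$) and the use of the step--down inequality on $\cE^\vee(2h_S+K_S)$ to kill $h^2\big(S,\cE(-h_S)\big)$ — and both are sound. What your version buys is that the proposition becomes verifiable inside this paper using only \eqref{RRGeneral}, Serre duality, and the definitions of Ulrich and aCM stated in the introduction, rather than requiring the reader to consult \cite{Cs4}.
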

\begin{proof}
See \cite[Proposition 2.1]{Cs4}.
\end{proof}

If $\cE$ is an Ulrich bundle on $S$, then the first Equality \eqref{eqUlrich} and the Hodge theorem applied to $h_S$ and $c_1(\cE)$ yields \begin{equation}
\label{HodgeGeneral}
c_1(\cE)^2\le\frac{\rk(\cE)^2}{4d}(3d+h_SK_S)^2.
\end{equation}

If $\cF$ is another Ulrich bundle on $S$ 
\begin{equation}
\label{CH2}
\chi(\cE^\vee\otimes\cF)=\rk(\cF)c_1(\cE)K_S-c_1(\cE)c_1(\cF)+\rk(\cE)\rk(\cF)(2d-\chi(\cO_S))
\end{equation}
(see Proposition 2.12 of \cite{C--H2} for the details).

Finally, the hypothesis $\rk(\cE)=1$ yields $c_2(\cE)=0$. Thus, if $\cE\cong\cO_S(D)$, then Equalities \eqref{eqUlrich} become
\begin{equation}
\label{eqLineBundle}
Dh_S=\frac12(3d+h_SK_S),\qquad D^2=2(d-\chi(\cO_S))+DK_S.
\end{equation}

\section{Ulrich--wildness of surfaces}
\label{sSurfaceWild}
In this section we first prove Theorem \ref{tSurfaceWild} stated in the introduction.

\medbreak
\noindent{\it Proof of Theorem \ref{tSurfaceWild}.}
Let $\cE$ be an Ulrich bundle of minimal rank $r$ on $S$: Theorem \ref{tUnstable} implies that $\cE$ is stable, hence simple (see \cite{H--L}, Corollary 1.2.8). 

The bundle  $\cF:=\cE^\vee(3h_S+K_S)$ is stable and simple too. In particular, every non--zero morphism $\cE\to\cF$ is an isomorphism, thanks to  \cite[Proposition 1.2.7]{H--L}. Moreover, $\cF$ is Ulrich by Proposition \ref{pUlrich}.

Since $c_1(\cF)=r(3h_S+K_S)-c_1(\cE)$, it follows from Equality \eqref{CH2} that
\begin{align*}
h^1\big(S,\cE^\vee\otimes\cF)&=-\chi(\cE^\vee\otimes\cF)+h^2\big(S,\cE^\vee\otimes\cF)\ge-\chi(\cE^\vee\otimes\cF)=\\
&=-rc_1(\cE)K_S+rc_1(\cE)(3h_S+K_S)-c_1(\cE)^2-r^2(2d-\chi(\cO_S)).
\end{align*}
The first Equality \eqref{eqUlrich} combined with Inequalities \eqref{HodgeGeneral} and \eqref{Bound1} yield
$$
h^1\big(S,\cE^\vee\otimes\cF)\ge\frac{r^2}{4d}(d^2+4\chi(\cO_S) d-(h_SK_S)^2)>2.
$$
Thus the statement follows from Theorem \ref{tFPL}.
\qed
\medbreak

\begin{example}
\label{eCI}
Thanks to  \cite{B--H} and \cite{H--U--B} each complete intersection surface is the support of an Ulrich bundle as well.

Let $S\subseteq\p N$ be a non--degenerate complete intersection surface of degree $d$. Then Theorem \ref{tSurfaceWild} implies that $S$ is certainly Ulrich--wild if either $5\le d\le 9$, or $N=3$ and $d=4$.  When $d=N=3$ (resp. $d=N=4$) the Ulrich--wildness of $S$ was proved in \cite{C--H2} (resp \cite{PL--T}). If $d\le 2$, then $S$ supports a finite number of indecomposable Ulrich (and aCM) bundles. If $d\ge10$ and $N=3$, $S$ does not satisfy Inequality \eqref{Bound1}, thus we cannot say anything about its Ulrich--wildness.
\end{example}

\begin{example}
\label{eAbelian}
Let $S$ be a minimal surface with $\kappa(S)=0$ over $\mathbb C$. The Enriques--Kodaira classification implies that $S$ is either abelian, or a $K3$ surface, or an Enriques surface, or a bielliptic surface.
The Riemann--Roch and the Kodaira vanishing theorems on $S$ imply that
$$
d=2h^0\big(S,\cO_S(h_S)\big)-2\chi(\cO_S)>4(2-\chi(\cO_S)). 
$$
Since trivially $h_SK_S=0$ in all these cases, we immediately deduce from Theorem \ref{tSurfaceWild}, \cite[Proposition 6]{Bea3} and \cite[Theorem 1]{Fa} that every such $S$ is Ulrich--wild.

The Ulrich--wildness of Enriques, $K3$ and bielliptic surfaces were already shown in \cite{Cs4}, \cite{Fa} and \cite{Cs5}.
\end{example}

We close this section with the following corollary of Theorem \ref{tSurfaceWild}, where $k=\bC$.

\begin{corollary}
\label{cGeneralType}
Let $S$ be a surface endowed with a very ample line bundle $\cO_S(h_S)$ such that $\cO_S(h_S)\cong\cO_S(\lambda K_S)$ for some positive integer $\lambda$.
If $S$ supports a bundle which is Ulrich with respect to $\cO_S(h_S)$, then $S$ is Ulrich--wild.
\end{corollary}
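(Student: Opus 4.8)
The plan is to derive the statement directly from Theorem~\ref{tSurfaceWild} by verifying that Inequality~\eqref{Bound1} holds under the hypothesis $\cO_S(h_S)\cong\cO_S(\lambda K_S)$. First I would record that, since the positive multiple $\lambda K_S$ is very ample, $K_S$ itself is ample; hence $S$ is a minimal surface of general type, so that $\chi(\cO_S)\ge1$ and $K_S^2\ge1$. Using $h_S\equiv\lambda K_S$ gives $d=\lambda^2K_S^2$ and $h_SK_S=\lambda K_S^2$, so that after substituting into \eqref{Bound1} and dividing by the positive quantity $\lambda^2K_S^2$, the inequality to be proved collapses to
\[
(\lambda^2-1)K_S^2+4\big(\chi(\cO_S)-2\big)>0.
\]
The whole argument then reduces to establishing this single numerical inequality.

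For the bulk of the cases I would invoke the elementary fact that a surface of general type has degree at least $5$ in every projective embedding (surfaces of degree $\le4$ are classified and all satisfy $\kappa\le0$; compare the rows with $d\le4$ of Table~A), so that $d=\lambda^2K_S^2\ge5$. If $\lambda\ge3$, then $(\lambda^2-1)K_S^2\ge8$ and the left--hand side is at least $8+4-8=4>0$, using only $\chi(\cO_S)\ge1$ and $K_S^2\ge1$. If $\lambda=2$, then $4K_S^2=d\ge5$ forces $K_S^2\ge2$, whence the left--hand side is at least $3\cdot2+4-8=2>0$. Thus the inequality, and with it Ulrich--wildness, holds whenever $\lambda\ge2$.

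The only remaining case is $\lambda=1$, i.e. the canonical embedding $h_S=K_S$, where the inequality reduces to $\chi(\cO_S)\ge3$; this I expect to be the main obstacle. When $q(S)=0$ it is immediate: the canonical image spans $\p{p_g-1}$ and, being a surface of general type, it is not $\p2$ and so cannot lie in a $\p N$ with $N\le2$; hence $p_g\ge4$ and $\chi(\cO_S)=1+p_g\ge5$. The delicate point is the irregular case $q(S)\ge1$, where one must exclude $\chi(\cO_S)\in\{1,2\}$, i.e. $p_g-q\le1$. Here $p_g\ge4$ forces $q\ge3$, and I would rule out such surfaces by appealing to the geography and classification of irregular surfaces of general type with small $\chi(\cO_S)$: for these the irregularity is bounded and the extremal configurations are products of curves (for instance $p_g=q=4$, $\chi=1$ forces $S$ to be a product of two genus--$2$ curves, whose canonical map is $4{:}1$ onto a quadric), none of which has very ample canonical class. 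Assembling these structural facts to exclude every irregular canonically embedded surface with $\chi(\cO_S)\le2$ is the technical heart of the argument; once it is in place, $\chi(\cO_S)\ge3$ holds also when $\lambda=1$, and the corollary follows from Theorem~\ref{tSurfaceWild}.
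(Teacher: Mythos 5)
Your overall skeleton coincides with the paper's: ampleness of $K_S$ plus the Nakai criterion give minimality, hence $K_S^2\ge1$ and $\chi(\cO_S)\ge1$; substituting $d=\lambda^2K_S^2$ and $h_SK_S=\lambda K_S^2$ reduces \eqref{Bound1} to $(\lambda^2-1)K_S^2+4\chi(\cO_S)>8$; the case $\lambda\ge3$ is immediate; and your variant for $\lambda=2$ (degree $\ge5$ for a surface of general type, hence $K_S^2\ge2$) is a legitimate substitute for the paper's argument, which instead deduces $K_S^2+\chi(\cO_S)\ge4$ from $h^0\big(S,\cO_S(2K_S)\big)\ge4$ via Proposition VII.5.3 of \cite{B--H--P--VV}. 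The regular subcase of $\lambda=1$ is also handled correctly.

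The problem is precisely where you locate it yourself: for $\lambda=1$ and $q(S)\ge1$ you do not prove anything, you only announce that ``geography and classification'' should exclude $\chi(\cO_S)\le2$, and the strategy you sketch (all offending surfaces are products of curves with non-birational canonical map) cannot be completed as stated. Concretely, with $q\le p_g\le q+1$ and $p_g\ge4$, the inequalities the paper invokes (Bogomolov--Miyaoka--Yau $K_S^2\le9\chi(\cO_S)$, Theorem 3.2 of \cite{De--Bea}, and Beauville's $p_g\ge2q-4$ from the appendix of \cite{De--Bea}) leave the possibilities $(p_g,q)=(4,4)$ when $\chi(\cO_S)=1$ and $(p_g,q)\in\{(4,3),(5,4),(6,5)\}$ when $\chi(\cO_S)=2$. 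The cases with $p_g=4$ die because a canonically embedded surface would then sit in $\p3$ and every smooth surface in $\p3$ is regular; the case $(6,5)$ is the equality case of Beauville's inequality, hence $S\cong C_2\times C_3$ whose canonical map has degree at least $2$ --- this, together with $(4,4)$ (which is indeed $C_2\times C_2$), is the only part your products-of-curves heuristic reaches. But the case $(p_g,q)=(5,4)$ is neither extremal for any of these inequalities nor a product of curves, and no classification statement disposes of it; the paper excludes it by an entirely different device, the double point formula for a smooth surface in $\p4$ (\cite[Example A.4.1.3]{Ha2}), which for $d=h_S^2=h_SK_S=K_S^2$ and $\chi(\cO_S)=2$ yields $d^2-17d+24=0$, an equation with no integral roots. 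Without an argument of this kind for $(5,4)$ (and without actually quoting the inequalities that bound $q$ in the first place), the proposal does not prove the corollary.
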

\begin{proof}
If $\cO_S(h_S):=\cO_S(\lambda K_S)$ is very ample, then $\cO_S(K_S)$ is ample. 
The ampleness of $\cO_S(K_S)$ and the Nakai criterion yield that $S$ is minimal.
Thus both $K_S^2$ and $\chi(\cO_S)$ are positive (see Theorem VII.1.1 of \cite{B--H--P--VV}).

Since the degree of $S$ is $d=\lambda^2K_S^2$ and $h_SK_S=\lambda K_S^2$, it follows that Inequality \eqref{Bound1} is equivalent to
\begin{equation}
\label{Bound2}
(\lambda^2-1)K_S^2+4\chi(\cO_S)>8,
\end{equation}
which is immediately satisfied if $\lambda\ge3$.

If $\lambda=2$, then $\cO_S(2K_S)$ is very ample, hence $h^0\big(S,\cO_S(2K_S)\big)\ge4$. Proposition VII.5.3 of \cite{B--H--P--VV} implies $K^2_S+\chi(\cO_S)\ge4$, whence we deduce Inequality \eqref{Bound2}.

If $\lambda=1$, then $\cO_S(K_S)$ is very ample, hence $p_g(S)\ge4$. In this case  Inequality \eqref{Bound2} is equivalent to $\chi(\cO_S)>2$. If $\chi(\cO_S)\le2$, then either $q(S)=p_g(S)$, or $q(S)+1=p_g(S)$. 

The Bogomolov--Miyaoka--Yau inequality (see Theorem VII.4.1 of \cite{B--H--P--VV}) and the equality $12\chi(\cO_S)=K_S^2+c_2(\Omega_S^1)$ imply $K_S^2\le 9\chi(\cO_S)$. By combining the latter inequality with Theorem 3.2 of \cite{De--Bea} we obtain $q(S)=p_g(S)=4$ if $\chi(\cO_S)=1$, and $6\ge q(S)+1=p_g(S)\ge4$ if $\chi(\cO_S)=2$. Since every surface in $\p3$ is regular, it follows that we can restrict to the case $6\ge q(S)+1=p_g(S)\ge5$. 

If $p_g(S)=5$, then the double point formula for $S$ (see \cite[Example A.4.1.3]{Ha2}) implies that $d=h_S^2=h_SK_S=K_S^2$ satisfies $d^2-17d+24=0$ which has no integral solutions. We deduce that $q(S)+1=p_g(S)=6$, hence $S$ should be the product of a curve of genus $2$ and a curve of genus $3$ (see the Theorem in the Appendix of \cite{De--Bea}): since the canonical map for such a surface has degree at least $2$, it follows a contradiction.

Thus, if $S$ supports an Ulrich bundle, then it is Ulrich--wild by Theorem \ref{tSurfaceWild}.
\end{proof}

\section{Ulrich bundles on intersections}
\label{sUlrichTensor}
In this section we prove  the following generalization of the main result of \cite{H--U--B}, which is usually stated only for complete intersections.

\medbreak
\noindent{\it Proof of Theorem \ref{tUlrichTensor}.}
Let $n:=\dim(X)$, $m:=\dim(Y)$: thanks to the hypothesis we have that $\dim(X\cap Y)=m+n-N$. If $m+n\le N$, then the statement is trivial, thus we will assume $m+n>N$ from now on.

There is an exact sequence of the form 
\begin{align*}
0\longrightarrow\cO_{\p N}(n-N)^{\oplus\alpha_{N-n}}&\longrightarrow\cO_{\p N}(n-N+1)^{\oplus\alpha_{N-n-1}}\longrightarrow\dots\\
&\longrightarrow\cO_{\p N}(-1)^{\oplus\alpha_1}\longrightarrow\cO_{\p N}^{\oplus\alpha_0}\longrightarrow\cA\longrightarrow0
\end{align*}
obtained by sheafifying Sequence \eqref{seqLinear} with $E:=\bigoplus_{t\in\bZ} H^0\big(X,\cA(th_X)\big)$.

The variety $Y$ intersects properly $X$ and it is locally complete intersection at each point $x\in X\cap Y$. Thus its local equations in $\cO_{\p N,x}$ are regular elements for $\cO_{X,x}$, hence also in $\cA_x\cong\cO_{X,x}^{\oplus a}$. When $x\in Y\setminus X$ then  $\cA_x$ is zero. Thus the above sequence tensored by $\cO_Y$ is  everywhere exact along $Y$, thanks to Theorem 7 of \cite{No}.

Taking into account that $\cA\otimes_{\p N}\cO_Y\otimes_Y\cB\cong\cA\otimes_{\p N}\cB$, by tensoring such a restricted sequence with $\cB$, we obtain the complex on $Y$
\begin{equation*}
\label{seqRestriction}
\begin{aligned}
0\longrightarrow\cB((n-N)h_Y)^{\oplus\alpha_{N-n}}&\mapright{\varphi_{N-n}}\cB((n-N+1)h_Y)^{\oplus\alpha_{N-n-1}}\mapright{\varphi_{N-n-1}}\dots\\
&\mapright{\varphi_2}\cB(-h_Y)^{\oplus\alpha_1}\mapright{\varphi_1}\cB^{\oplus\alpha_0}\longrightarrow\cA\otimes_{\p N}\cB\longrightarrow0.
\end{aligned}
\end{equation*}
Such a complex is actually exact on $X\cap Y$, because $\cB_x\cong\cO_{Y,x}^{\oplus b}$. When $x\in Y\setminus X$ it is still exact because $\cA_x$ is zero.

Since $\cB$ is Ulrich on $Y$, it follows that 
\begin{equation}
\label{BUlrich}
h^i\big(Y,\cB(-ih_{Y})\big)=h^j\big(Y,\cB(-(j+1)h_{Y})\big)=0,
\end{equation}
for $i>0$ and $j<m$ by definition (see the introduction). 

Let $\mathcal C_0:=\cA\otimes_{\p N}\cB$ and $\mathcal C_\lambda:=\im(\varphi_\lambda)$ for $1\le \lambda\le N-n$: notice that $\mathcal C_{N-n}=\cB((n-N)h_Y)^{\oplus\alpha_{N-n}}$. We have the exact sequences
$$
0\longrightarrow\cC_{\lambda+1}\longrightarrow\cB(-\lambda h_Y)^{\oplus\alpha_\lambda}\longrightarrow\cC_\lambda\longrightarrow0.
$$
Tensoring the above exact sequences by $\cO_Y((\lambda-i)h_Y)$ and $\cO_Y((\lambda-j-1)h_Y)$ and taking their cohomologies, Equalities \eqref{BUlrich} yield respectively 
\begin{gather*}
h^{i}\big(Y,\cC_\lambda((\lambda-i)h_Y)\big)\le h^{i+1}\big(Y,\cC_{\lambda+1}((\lambda+1-i-1)h_Y)\big),\\
h^{j}\big(Y,\cC_\lambda((\lambda-j-1)h_Y)\big)\le h^{j+1}\big(Y,\cC_{\lambda+1}((\lambda+1-j-2)h_Y)\big),
\end{gather*}
for each $i>0$, $j<m$ and $0\le \lambda\le N-n-1$.
Taking into account that $\cC_0=\cA\otimes_{\p N}\cB$ and $\cC_{N-n}=\cB((n-N)h_Y)^{\oplus\alpha_{N-n}}$, the above inequalities return
\begin{equation*}
\label{Dis1}
\begin{gathered}
0\le h^{i}\big(X\cap Y,\cA\otimes_{\p N}\cB(-ih_Y)\big)\le h^{i+N-n}\big(Y,\cB((n-N-i)h_Y)^{\oplus\alpha_{N-n}}\big),\\
0\le h^{j}\big(X\cap Y,\cA\otimes_{\p N}\cB(-(j+1)h_Y)\big)\le h^{j+N-n}\big(Y,\cB((n-N-j-1)h_Y)^{\oplus\alpha_{N-n}}\big),
\end{gathered}
\end{equation*}
for $i>0$ and $j+N-n<m$, i.e. $j< n+m-N$. Equalities \eqref{BUlrich} then imply that  $\cA\otimes_{\p N}\cB$ is also Ulrich by definition. 

Finally,  $\cA$ and $\cB$ are locally free along $X\cap Y$, hence the same is true for $\cA\otimes\cB$.
\qed
\medbreak

\begin{example}
\label{eGPK3}
Let  $F:=G_1\cap G_2$ be a GPK$^{\mathrm 3}$ threefold over $\bC$ (see the introduction). 

Theorem 3.5 and Corollary 4.6 of \cite{C--MR} guarantee the existence of an Ulrich bundle $\cE_i$ on $G_i$ of rank $3$. Since $\Pic(G_i)$ is generated by $\cO_{G_i}( h_{G_i})$, there  is  $\alpha_i\in\bZ$ such that $c_1(\cE_i)=\alpha_i h_{G_i}$. The rational number
$\mu(\cE_i)$ does not depend on the choice of the Ulrich bundle on $G_i$ (this is a standard property of Ulrich bundles: e.g. see \cite{C--MR}, Proposition 2.5), hence we finally obtain $\alpha_i=3$ by combining the previous discussion with Corollary 3.7 in \cite{C--MR}. 

Theorem \ref{tUlrichTensor} implies  that $F$ supports the Ulrich bundle $\cE:=\cE_1\otimes_{\p9}\cE_2$ which has rank $9$. Moreover, 
$c_1(\cE)=\rk(\cE_1)c_1(\cE_2)F+\rk(\cE_2)c_1(\cE_1)F=18h_F$.

Each  hyperplane section $S$ of $F$ supports Ulrich bundles as well. The adjunction formula implies that $S$ is canonically embedded, hence it is Ulrich--wild thanks to Corollary \ref{cGeneralType}. It is not difficult to check that  $q(S)=0$, $p_g(S)=9$, and $K_S^2=25$. In particular $S$ is not complete intersection.
\end{example}

\begin{example}
\label{eDelPezzo}
If $F$ is a del Pezzo threefold over $\bC$, $F\subseteq \p{a+1}$, $3\le a\le 8$, then it supports an Ulrich bundle of rank $2$ with $c_1(\cF)=2h_F$ (see \cite[Proposition 8]{Bea3}: see also \cite{A--C}, \cite{C--F--M1}, \cite{C--F--M2}, \cite{C--F--M3} and the methods described therein).
By combining this fact with the existence of Ulrich bundles on each hypersurface in $\Delta\subseteq\p N$ of degree $\delta$ (see \cite{B--H}), we deduce that each smooth hypersurface section $S=F\cap \Delta$ certainly supports Ulrich bundles of high rank. 

The surface $S$ has degree $\delta a$ and $K_S=(\delta-2)h_S$ by adjunction. In particular $h_SK_S=(\delta-2)\delta a$. Moreover the cohomology of 
$$
0\longrightarrow\cO_F(-\delta h_F)\longrightarrow\cO_F\longrightarrow\cO_S\longrightarrow0
$$
yields $\chi(\cO_S)=1-\chi(\cO_F(-\delta h_F))$. The Riemann--Roch theorem on $F$ and the equality $h_Fc_2(\Omega^1_{F\vert k})=12$ finally return
$$
\chi(\cO_S)=-\frac16\delta(\delta-1)(\delta-2)a+\delta.
$$
Simple computations show that Inequality \eqref{Bound1} is then equivalent to
$$
-\delta(\delta-1)(\delta-5)a>24-12\delta.
$$
When $a\ge3$, the above inequality is satisfied if and only if $2\le \delta\le 5$. Thus $S$ is Ulrich--wild in this range, thanks to Theorem \ref{tSurfaceWild}. Notice that $S$ is a surface of general type when $\delta\ge3$ and it is a $K3$ surface when $\delta=2$: the existence of special Ulrich bundles of rank $2$ on each $K3$ surface has been recently proved in \cite{Fa}. 

We spend some further words in the latter case. If the rank of $\Delta\subseteq\p{a+1}$ is $4$, then $\Delta$ is endowed with two pencils of linear spaces of dimension $a-1$. They induce two fibrations on $S$ with elliptic normal curves of degree $a$ as fibres. 

If we choose projective coordinates in $\p{a+1}$ such that $\Delta=\{\ x_0x_2-x_1x_3=0\ \}$, then the matrix
$$
\left(
\begin{array}{cc}
x_0 & x_3\\
x_1& x_2
\end{array}
\right),
$$
defines a monomorphism $\varphi\colon\cO_{\p{a+1}}(-1)^{\oplus2}\to\cO_{\p{a+1}}^{\oplus2}$, whose cokernel $\cS$ is  an Ulrich sheaf of rank $1$ on $\Delta$. Theorem \ref{tUlrichTensor} yields that $\cE:=\cF\otimes\cS$ is an Ulrich bundle of rank $2$ on $S$. Each non--zero section of $\cS$ vanishes on a linear space of one of the aforementioned pencils, thus $\cS\otimes\cO_S$ is the line bundle $\cO_S(A)$ associated to the corresponding elliptic fibration on $S$. It follows that $c_1(\cE)=2h_S+2A$. 

Notice that $A^2=0$ and $h_SA=a$, thus $c_1(\cE)^2=16a\ne18a=(3h_S)^2$: in particular $\cE$ is not special.
For further examples in this direction see \cite{C--G}.
\end{example}

\begin{remark}
\label{rNonMinimal}
Even if $\cA$ and $\cB$ are Ulrich vector bundles of minimal rank on $X$ and $Y$, the bundle $\cA\otimes\cB$ need not be of minimal rank on $X\cap Y$.

For example, let $F\subseteq\p4$ be a smooth cubic threefold and $\mathcal L\in\Pic(F)$. If $\mathcal L$ were Ulrich, then $h^0\big(F,\mathcal L(-h_F)\big)=0$ and $h^0\big(F,\mathcal L\big)\ne0$. Since the $\Pic(F)$ is generated by $\cO_F(h_F)$, it follows that $\mathcal L\cong\cO_F$, which is not Ulrich because $h^3\big(F,\cO_F(-3h_F)\big)\ne0$.

Nevertheless, for each general hyperplane $H\subseteq\p4$, the surface $S:=F\cap H$ supports Ulrich line bundles (see \cite{Bea}, Corollaries 6.4 and 1.12).
\end{remark}

\section{Ulrich wildness of some surfaces}
\label{sLowDegree}
In this section we will prove Theorems \ref{tLowDegree} and \ref{tLowWild} stated in the introduction.

\medbreak
\noindent{\it Proof of Theorem \ref{tLowDegree}.}
If $S$ is in the classes  (VIII), (XVIII), (IL) the existence of special Ulrich bundles of rank $2$ have been proved in \cite[Proposition 7.6]{C--K--M2} and \cite[Theorem 1]{Fa}. Every surface in  class (XXVIII) is the blow up of a polarized surface in class (IL) at a point. Thus \cite[Theorem 0.1]{Kim} implies that $S$ supports a  special Ulrich bundle of rank $2$. 

In the cases (XIII), (XIX), (XXX), (LI) and (LII), then $S$ is either a surface in $\p3$ of degree $5\le d\le8$, or quadro--quartic complete intersection in $\p4$. For the existence of Ulrich bundles (possibly of very high rank) in these cases see \cite{H--U--B} and the references therein. 

In \cite[Proposition 7.6]{Bea1} the author proves the existence of special Ulrich bundles of rank $2$ on the general surface in $\p3$ of degree $d\le 15$. This proves the statement in cases (XIII), (XIX), (XXX) and (LII).

We now examine the case (LI). We know the existence of a quadric $Q$ and a quartic $F$ in $\p4$ such that $S=Q\cap F$. If $F$ is general, then Proposition 8.9 of \cite{Bea1} guarantees the existence of a special Ulrich bundle $\cF$ of rank $2$ on $F$: we have $c_1(\cF)=3h_F$. If $Q$ is smooth, then it supports a unique spinor bundle $\cS$. Theorem 2.3 and Remark 2.9 of \cite{Ot} imply that $\cS(h_Q)$ is aCM, initialized and the Serre duality implies $h^3\big(Q,\cS(-2h_Q)\big)=h^0\big(Q,\cS\big)$. Thus $\cS(h_Q)$ is Ulrich: we have $c_1(\cS(h_Q))=h_Q$ thanks to Remark 2.9 of \cite{Ot}. Theorem \ref{tUlrichTensor} implies that $\cE:=\cF\otimes\cS(h_Q)$ is Ulrich on $S$ of rank $4$ with $c_1(\cE)=2c_1(\cF)S+2c_1(\cS(h_Q))S=8h_S$. Since $K_S=h_S$, it follows that $\cE$ is special.

Surfaces $S$ in classes  (I), (II), (IV), (VI), (XI), (XII), (XVI), (XVII), (XXVI), (XXVII), (XL), (XLIII), (XLVI), (XLVII) support Ulrich line bundles, because they are embedded as scroll in these cases (see \cite[Proposition 5]{Bea3}). The Bordiga (that is the surfaces in class (IX)) and Castelnuovo surfaces (surfaces in class (XIV)) also carry Ulrich line bundles (see \cite{MR--PL1}). The same is true for del Pezzo surfaces of degree $d=3,\dots,7$ (surfaces in classes (III), (V), (X), (XV), (XXV): see \cite{PL--T}). 

Surfaces in classes (VII), (XLI), (XLV) do not support Ulrich line bundles (see \cite{Cs4}, Example 2.1, \cite{PL--T}, \cite{Cs4} respectively). 

Surfaces in classes (VII), (XX), (XXI), (XXII), (XXIII), (XXIV), (XXXII), (XXXIII), (XXXIV), (XXXV), (XXXVI), (XXXVII), (XXXVIII), (XXXIX), (XLI), (XLII) have $p_g(S)=q(S)=0$. Moreover, the line bundle $\cO_S(h_S)$ is {\sl non--special} (i.e. $h^1\big(S,\cO_S(h_S)\big)=0$), as one can check by confronting the value of $N$ listed in the penultimate column in Table A with the expected dimension of the linear system. Thus the existence of a special Ulrich bundle of rank $2$ on them follows from Theorem 1.1 of \cite{Cs4}.

Surfaces $S$ in class (XLV) have $p_g(S)=0$ and $q(S)=1$. Moreover the linear system $\cO_S(h_S)$ is non--special thanks to \cite{G--P}, Proposition 3.1. The existence of a special Ulrich bundle of rank $2$ on such an $S$ follows from Theorem 1.1 of \cite{Cs5} (the case $e=1$ is also covered by \cite{A--C--MR}).

In the case (XXXI) the linear system $\cO_S(h_S)$ is special with $h^1\big(S,\cO_S(h_S)\big)=1$. The surface $S$ can be obtained in two steps as follows. 

In the first step we blow up  $\p2$ at some points $P_1,\dots, P_4$, obtaining a surface $S_1$: if ${\sigma}_1\colon {S}_1\to \p2$ is the blow up morphism, then we embed $S_1$ via $\cO_{S_1}(h_{S_1}):={\sigma}_1^*\cO_{\p2}(6)\otimes \cO_{S_1}(-2\sum_{i=1}^{4}e_i)$. The points $P_1,\dots,P_4$ are in general linear position in $\p2$, i.e. any three of them are not collinear: otherwise, there would exist an effective divisor $D$ on $S_1$ (the proper transform of the line through the three collinear points) such that $h_{S_1}D=0$, contradicting the ampleness of $\cO_S(h_{S_1})$. With this in mind it is immediate to check that $\cO_{S_1}(h_{S_1})$ is non--special. Thus $S_1$ supports a  special Ulrich bundle of rank $2$ with respect to $\cO_{S_1}(h_{S_1})$ also in this case. 

In the second step we blow up $S_1$ at some points $P_{5},\dots,P_{16}$: if $\sigma_2\colon S\to S_1$ is the blow up morphism, then we embed $S$ via $\cO_S(h_S):={\sigma}_2^*\cO_{S_1}(h_{S_1})\otimes \cO_{{S}}(-\sum_{j=5}^{16}e_j)$. Thus Theorem 0.1 of \cite{Kim} implies that $S$ supports a  special Ulrich bundle of rank $2$ also in this case.

Consider the case (XLIV). On the one hand $h^0\big(S,\cO_S(h_S)\big)=5$. On the other hand the Riemann--Roch theorem yields $\chi(\cO_S(h_S))=4$, hence $h^1\big(S,\cO_S(h_S)\big)=1$. Thus the surface is specially polarized in this case too.

Let $\mathfrak a$ be an arbitrary divisor on $C$ of degree $4+e$. Then $\cO_X(h_X):=\cO_X(2\xi+\mathfrak a f)$ is very ample on $X$, thanks to \cite[Theorem 3.3]{Ho1} when $e=1$ and \cite[Theorem 3.4]{Ho2} when $e=-1$. Moreover, it is also non--special, thanks to \cite[ Proposition 3.1]{G--P}. Thus $X$ supports a special Ulrich bundle of rank $2$ with respect to $\cO_X(h_X)$, thanks to Corollary 3.5 of \cite{Cs5} (the case $e=1$ being also covered by \cite{A--C--MR}). The existence on $S$ of a special Ulrich bundle of rank $2$ follows again from Theorem 0.1 of \cite{Kim}.

We now construct an Ulrich bundle on a surface $S$ in class (XLVIII) (see \cite{Ok3} for details). Let $Z\subseteq S$ be a set of $9$ general points. Since $\cO_S(h_S+4e_1)\cong\sigma^*\cO_X(h_X)\otimes\cO_S(2e_1)$, it follows that each divisor in  $\vert \cO_S(h_S+4e_1)\vert$ is the sum of a divisor in $\vert \sigma^*h_X\vert$ plus $2e_1$, hence 
$$
h^0\big(S,\cO_S(h_S+4e_1)\big)=h^0\big(S,\sigma^*\cO_X(h_X)\big).
$$
Moreover, $R^i\sigma_*\sigma^*\cO_X(h_X)\cong \cO_X(h_X)\otimes R^i\sigma_*\cO_S\cong 0$ if $i\ge1$ (see \cite{Ha2}, Proposition V.3.4), hence 
\begin{equation}
\label{h^0K3octic}
h^0\big(S,\sigma^*\cO_X(h_X)\big)=h^0\big(X,\cO_X(h_X)\big)=8.
\end{equation}
We deduce from Theorem 5.1.1 in \cite{H--L} that $Z$ has the Cayley--Bacharach property with respect to $\cO_S(h_S+4e_1)$, hence there is an exact sequence of the form
\begin{equation*}
\label{seqK3octic}
0\longrightarrow \cO_S\longrightarrow \cF\longrightarrow \cI_{Z\vert S}(h_S-K_S+4e_1)\longrightarrow 0
\end{equation*}
where $\cF$ is a vector bundle of rank $2$ with $c_1(\cF)=h_S-K_S+4e_1$ and $c_2(\cF)=9$. 

The bundle $\cE:=\cF(h_S+K_S-2e_1)$ fits into the exact sequence
\begin{equation}
\label{seqK3octic}
0\longrightarrow \cO_S(h_S+K_S-2e_1)\longrightarrow \cE\longrightarrow \cI_{Z\vert S}(2h_S+2e_1)\longrightarrow 0:
\end{equation}
we have $c_1(\cE)=3h_S+K_S$, whence $h^0\big(S,\cE(-h_S)\big)=h^0\big(S,\cE^\vee(2h_S+K_S)\big)$
and $c_2(\cE)=27$. Moreover, computing the cohomology of Sequence \eqref{seqK3octic} tensored by $\cO_S(-h_S)$, taking into account the isomorphisms $\cO_S(K_S)=\cO_S(e_1)$, $\sigma^*\cO_X(h_X)\cong\cO_S(h_S+2e_1)$ and Equality \eqref{h^0K3octic}, we finally obtain
$$
h^0\big(S,\cE(-h_S)\big)\le h^0\big(S,\cO_S(-e_1)\big)+h^0\big(S,\cI_{Z\vert S}\otimes\cO_X(h_X)\big)=0,
$$
because $\deg(Z)=9$ and $h^0\big(S,\sigma^*\cO_X(h_X)\big)=8$. Proposition \ref{pUlrich} yields that $\cE$ is a special Ulrich bundle of rank $2$ on $S$. 
\qed
\medbreak

\begin{remark}
\label{kappa=1}
If $S$ is in classes (XXIX) and (L), then its canonical map is an elliptic fibration $\epsilon\colon S\to \p1$, fibres being elliptic normal curves of  degrees $3$ and $4$.  

In \cite[Theorem III.4.2 and Observation III.3.5]{Lop}, the author proves that $\Pic(S)$ is generated by $h_S$ and $K_S$  for the very general surface $S$ as above. The map $\epsilon$ has no sections, otherwise there would exist integers $x,y$ such that $(xK_S+yh_S)K_S=1$, contradicting the conditions $K^2_{S}=0$ and $3\le h_SK_S\le 4$. 
Thus, we cannot use results from \cite{MR--PL2} for proving the existence of Ulrich bundles on such surfaces.
\end{remark}

We are now ready to prove Theorem \ref{tLowWild} as an easy corollary of Theorem \ref{tSurfaceWild}.

\medbreak
\noindent{\it Proof of Theorem \ref{tLowWild}.}
Surfaces $S$ in classes (III), (V), (IX), (X), (XI), (XIV), (XV), (XVI), (XX), (XXI), (XXII), (XXIII), (XXIV), (XXV), (XXVI), (XXXII), (XXXIII), (XXXIV), (XXXV), (XXXVI), (XXXVII), (XXXVIII), (XXXIX), (XL), (XLI), (XLII) are non--special and have $p_g(S)=q(S)=0$. Thus their Ulrich--wildness follows from Theorem 1.3 of \cite{Cs4}. By the same theorem, if $d\le 4$ and $\pi(S)$  vanishes, then $S$ is not Ulrich--wild: these cases are (I), (II), (IV), (VI), (VII).

Similarly surfaces $S$ in classes (XII), (XVII), (XXVII), (XLIII), (XLV) are Ulrich--wild, thanks to \cite[Theorem 1.3]{Cs5} and \cite[Proposition 3.1]{G--P}.

In cases (VIII), (XIII), (XVIII), (XIX), (XXVIII), (XXX), (XXXI), (XLIV), (XLVIII), (IL), (LI), (LII) 
the statement follows easily from Theorem \ref{tSurfaceWild}, thanks to the invariants listed in the table.

In cases (XLV), (XLVI) the surface $S$ is geometrically ruled on a curve $C$ and it is embedded in $\p N$ as a scroll. Following the notation in Remark \ref{rTable}, we know that the invariant $e$ is $-2$ and $0$ in cases (XLV) and (XLVI) respectively. Thus, we have $\cO_S(h_S)\cong \cO_S(\xi+p^*\mathfrak b)$, hence $2\deg(\mathfrak b)-e=\deg(S)=8$.

Assertion 2) of Proposition 5 in \cite{Bea3} implies that for each general $\mathfrak u\in\Pic^{g-1}(C)$ then $\mathcal L:=\cO_S(h_S+p^*\mathfrak u)\cong \cO_S(\xi+p^*\mathfrak{b}+p^*\mathfrak u)$ is Ulrich. It follows from Proposition \ref{pUlrich} that $\cM:=\cO_S(2h_S+K_S-p^*\mathfrak u)\cong p^*\cO_E(2\mathfrak b+\mathfrak h+\mathfrak k-\mathfrak u)$ is Ulrich too. 

Such bundles are trivially simple and $h^0\big(S,\mathcal L\otimes\cM^\vee\big)=h^0\big(S,\cM\otimes\mathcal L^\vee\big)=0$ because $\mathcal L\not\cong\cM$. Since $\mathcal L\otimes\cM^\vee\cong\cO_S(\xi-p^*\mathfrak b-p^*\mathfrak h-p^*\mathfrak k+2\mathfrak u)$, it follows from Equality \eqref{RRGeneral} that
$$
h^1\big(S,\mathcal L\otimes\cM^\vee\big)\ge-\chi(\mathcal L\otimes\cM^\vee)=2\deg(\mathfrak b)-e=8.
$$
The statement thus follows from Theorem \ref{tFPL}.
\qed
\medbreak

\begin{remark}
The proofs of Theorems 1.3 of \cite{Cs4} and \cite{Cs5} used above contain a gap which can be overcome by assuming that $k$ is uncountable (see the erratum). Thus, such theorems certainly hold when $k=\bC$ as we assume in Theorem \ref{tLowWild}. 
\end{remark}
\medbreak
\centerline{\textsc{Acknowledgement}}
\smallbreak

The author is indebted with the referee for her/his criticisms, questions, remarks and suggestions which have considerably improved the whole exposition.

\end{document}